\newtheorem{lem}{Lemma}[section]
\newtheorem{defi}{Definition}[section]
\newtheorem{thm}{Theorem}[section]
\newtheorem{cor}{Corollary}[section]
\newtheorem{prop}{Proposition}[section]
\theoremstyle{definition}
\newtheorem{prob}{Problem}[section]
\newtheorem{re}{Remark}[section]
\newtheorem{clm}{Claim}[section]
\newtheorem{example}{Example}[section]
\begin{document}
\title{Signed graphs with symmetric spectra}
\author{Deqiong Li$\,^{1,}$\thanks{deqiongli@126.com}\qquad Qiongxiang Huang\,$^{2,}$\thanks{Corresponding author: huangqxmath@163.com}\\[0.2cm]
{\small $^1$ School of Mathematics and Statistics}\\
{\small Hunan  University of  Technology and Business, Changsha, Hunan 410205, China}\\
\\[0.01 cm]
{\small$^2$  College of Mathematics and Systems Science}\\
{\small Xinjiang University, Urumqi, Xinjiang 830046, China}\\}

\date{}
\maketitle
\begin{abstract}
A signed graph $\Gamma$ is spectrally symmetric  if, for each eigenvalue $\lambda$ of $\Gamma$, $-\lambda$ is also an eigenvalue of $\Gamma$ with the same multiplicity.  $\Gamma$ is said to be sign-symmetric if $\Gamma$ is switching isomorphic to   $-\Gamma$. It is well known  that     sign symmetry implies spectral symmetry,  but not vice versa. Bipartite signed graphs are  sign-symmetric. However,  there  exist spectrally symmetric non-bipartite signed graphs and sign-symmetric non-bipartite signed graphs. In this paper, we   provide   necessary and   sufficient  conditions for the spectrally symmetric signed graphs, as well as  for the sign-symmetric  signed graphs. Moreover, we give a classification of  sign-symmetric signed graphs and completely  determine one class of them that includes some known results.
\end{abstract}

\noindent
{\bf Keywords}:  signed graph;  sign symmetry;  spectral symmetry

\noindent
{\bf MSC:} 05C22, 05C50

\baselineskip=0.25in

\section {Introduction}
 Let $G=(V(G), E(G))$ be a simple connected graph.  A signed graph  $\Gamma=(G, \sigma)$ is a graph $G$ together with a \emph{sign function}   $\sigma: E(G)\rightarrow \{+1, -1\}$, where $G$ is called the \emph{underlying graph} of $\Gamma$ and $\Gamma$ is  a signed graph on $G$.  The vertex (resp. edge) set of $\Gamma$ is just the vertex (resp.
edge) set of its underlying graph, denoted by $V(\Gamma)$ (resp. $E(\Gamma)$). The signed graphs  contain unsigned graphs because $\Gamma=(G, \sigma)$ is exactly $G$ if $\sigma(e)=1$ for any edge $e\in E(\Gamma)$.
An edge $e\in E(\Gamma)$ is called \emph{positive} (resp. \emph{negative}) in $\Gamma$ if  $\sigma (e) = 1$ (resp. $\sigma (e) = -1$). For $U_1,U_2 \subseteq V(\Gamma)$, $\Gamma[U_1]$  denotes the subgraph of $\Gamma$  induced by $U_1$, and $E_\Gamma(U_1,U_2)$  denotes the set of  the edges of $\Gamma$ between $U_1$ and $U_2$.

Switching  $\Gamma=(G,\sigma)$ with respect to a  subset $U \subseteq V(\Gamma)$ means reversing the edge signs in the edge cut  $E_\Gamma(U, V(\Gamma)\setminus U)$,  the resulting graph is called  a \emph{switching} of $\Gamma$ denoted by $\Gamma^{U}=(G, \sigma^{U})$. The set
$[\Gamma]=\{\Gamma^U\mid U \subseteq V(\Gamma)\}$ forms a \emph{switching equivalence class} containing $\Gamma$, in which signed graphs  are  \emph{switching equivalent}.
Two signed graphs $\Gamma=(G, \sigma)$ and $\Gamma'=(G', \sigma')$  are \emph{isomorphic}, denoted by $\Gamma\cong\Gamma'$,
 if there is an  isomorphism between $G$ and $G'$ that  maintains edge signs. $\Gamma$ and $\Gamma'$  are called \emph{switching isomorphic}, denoted by $\Gamma \simeq \Gamma'$, if $\Gamma$ is isomorphic to a switching of  $\Gamma'$.
 An isomorphism from  $\Gamma$  to itself is  an \emph{automorphism} of $\Gamma$. All  automorphisms of $\Gamma$ consist of an  automorphism group  $Aut(\Gamma)$.

The adjacency matrix $A(\Gamma)$ of a signed graph $\Gamma$ is defined similarly
as an unsigned graph, with 1 for positive edges, $-1$ for negative edges, and 0 otherwise.
The set of eigenvalues of $A(\Gamma)$  with their multiplicities is called the \emph{spectrum} of  $\Gamma$ denoted by $\mathrm{Spec}(\Gamma)$.  It is clear that  there is a $(1, -1)$-diagonal matrix $D$ such that $A(\Gamma)=DA(\Gamma')D^\top$ if $\Gamma \sim\Gamma'$, and $\Gamma\simeq\Gamma'$ if and only if there is a $(0, 1, -1)$-permutation matrix $P$ such that $A(\Gamma)=PA(\Gamma')P^\top$.
So, switching equivalent (isomorphic) signed graphs have the same spectrum. The spectrum of a signed graph $\Gamma$ is called \emph{symmetric}  if, for each eigenvalue $\lambda$ of $A(\Gamma)$, $-\lambda$ is also an eigenvalue of $A(\Gamma)$ with the same multiplicity.

Let $-\Gamma= (G,-\sigma)$ be the signed
graph obtained from $\Gamma$ by reversing the signs of all edges.
A signed graph $\Gamma$ is said to be \emph{sign-symmetric} if $\Gamma\simeq-\Gamma$.
A sign-symmetric graph is spectrally symmetric but not the other way around \cite{Belardo}.
It is well known that  the spectrum of a graph is  symmetric if and only if it is a bipartite graph \cite{Cvetk}. Bipartite signed graphs are sign-symmetric, which means that they have symmetric spectra. In addition to  bipartite signed graphs, there are  non-bipartite signed graphs with symmetric spectra and there are  non-bipartite signed graphs with sign symmetry.   Therefore, the study of spectral symmetry  and  sign symmetry of signed graphs  focuses  on non-bipartite  signed  graphs.

 Akbari et al.    presented a  construction of completed signed graphs with sign symmetry  \cite{Akbaria}. In \cite{Belardo},
 Belardo et al.  constructed a complete signed graph with a symmetric spectrum but not sign-symmetric,  posing the following problem: Are there connected non-complete signed graphs whose spectrum is symmetric but they are not sign-symmetric?
 Ghorbani et al. in \cite{Ghorbani}    presented some infinite families  of signed graphs with
symmetric spectra but not sign-symmetric which answered the problem in \cite{Belardo}, and  gave a method for constructing  sign-symmetric signed graphs. Additionally, they enumerated all complete signed graphs of orders 4, 5, 6, 8, 9 with symmetric spectra and sign symmetry.
Stani\'{c} in \cite{Stanic} proved that the Cartesian product $\Gamma_1 \Box \Gamma_2$   and   corona $\Gamma_1 \circ K_1$  both have  symmetric spectra if $\Gamma_1$ and $\Gamma_2$ are two spectrally symmetric signed graphs. He furthermore constructed two infinite families of connected non-complete signed graphs being spectrally symmetric  but not  sign-symmetric.

Inspired by the above research,
in this paper we consider the spectral symmetry and sign symmetry of signed graphs.
Necessary and   sufficient  conditions are given  for spectrally symmetric signed graphs in Section 2 (see Theorem \ref{Mthm-1}), as well as for sign-symmetry of signed graphs in Section 3 (see Theorem \ref{C-p-4}). Moreover, we give a classification of the sign-symmetric signed graphs (see Proposition \ref{pro-1}).  In Section 4, we  give three constructing theorems (see Theorems \ref{thm-main-1}, \ref{ff-thm-2} and  \ref{ff-thm-3}) that completely determine one class of the  sign-symmetric signed graphs which includes some known results (see Theorems \ref{thm-10} and  \ref{thm-12}).

\section{Signed graphs with symmetric spectra}

 In this section, we will give a sufficient and  necessary condition for signed graphs with symmetric  spectra. First, we cite a well known algebraic condition for  spectral  symmetry (e.g.  Remark 3.2 in \cite{Ghorbani}), which   can  be easily verified.
\begin{thm}\label{spec-sym-cor-1}
The signed graph $\Gamma=(G,\sigma)$ has a symmetric  spectrum if and only if its  characteristic polynomial $\phi(\Gamma, x) =\sum_{i=0}^na_ix^{n-i}$  satisfies  $a_i =0$ for all odd indices $i$.
\end{thm}
Let $\Gamma'$ be  a subgraph of $\Gamma=(G,\sigma)$ (denoted by $\Gamma'\subseteq \Gamma$). The sign of  $\Gamma' $  is defined by $\sigma(\Gamma')=\prod_{e\in \Gamma'}\sigma(e)$. A cycle $C$ of $\Gamma$ is  positive if  $\sigma(C)=1$ and  negative otherwise.
A \emph{basic figure} of $\Gamma$ is  disjoint union of some cycles and edges.  Let $p(B)$  denote the number of components of the graph $B$.  The Sachs formula for  the signed graph version is presented as follows.

\begin{thm}[$\!\!$\cite{Belardo2}]\label{coeff-thm-1}
Let $\Gamma=(G, \sigma)$ be a signed graph with $n$ vertices and $\phi(\Gamma, x) =\sum_{i=0}^n a_ix^{n-i}$. Then
$a_i =\sum_{B\in\mathcal{B}_i}(-1)^{p(B)}2^{|C(B)|}\sigma(B)$, where $\mathcal{B}_i$ is the set of basic figures on $i$ vertices in $G$ and $C(B)$ is the set of cycles in $B$.
\end{thm}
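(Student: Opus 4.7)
The plan is to adapt the classical Sachs coefficient formula to the signed setting by expanding the determinant in $\phi(\Gamma,x)=\det(xI-A(\Gamma))$ in two familiar stages. First I would use the standard principal-minor expansion
\begin{equation*}
\det(xI-A(\Gamma))=\sum_{S\subseteq V(G)} (-1)^{|S|}\,x^{\,n-|S|}\,\det A(\Gamma)[S],
\end{equation*}
where $A(\Gamma)[S]$ is the principal submatrix indexed by $S$. Reading off the coefficient of $x^{n-i}$ yields $a_i=(-1)^{i}\sum_{|S|=i}\det A(\Gamma)[S]$, so the problem reduces to evaluating each principal minor as a signed sum over basic figures whose vertex set is exactly $S$.

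Next I would expand each $\det A(\Gamma)[S]$ with $|S|=i$ via the Leibniz formula. Because the diagonal of $A(\Gamma)$ is zero, only fixed-point-free permutations $\pi$ of $S$ contribute; such $\pi$ decompose into disjoint cycles of length $\ge 2$, and the non-vanishing of $\prod_{v\in S}A(\Gamma)_{v,\pi(v)}$ forces each $2$-cycle $(u,v)$ in $\pi$ to be supported on an edge $uv$ of $G$ and each longer cycle $(v_1,\ldots,v_k)$ to trace out a cycle $v_1\cdots v_k v_1$ in $G$. This sets up a bijection between fixed-point-free permutations of $S$ and pairs $(B,\omega)$, where $B$ is a basic figure on $S$ and $\omega$ orients each $C_k$-component of $B$; since $P_2$-components admit a unique ``orientation'' and each genuine cycle admits two, a given $B$ arises from exactly $2^{|C(B)|}$ permutations.

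Finally I would collect the sign contributions. For any permutation of $i$ elements with $p$ cycles in its decomposition, $\operatorname{sgn}(\pi)=(-1)^{i-p}$, so with $p=p(B)$ we get an orientation-independent factor $(-1)^{i-p(B)}$. The matrix-entry product factors across components: each $P_2$-component contributes $\sigma(uv)^{2}=1$, and each $C_k$-component contributes $\sigma(C)$ for either orientation (reversing a closed walk does not change the product of its edge signs), so $\prod_{v\in S}A(\Gamma)_{v,\pi(v)}=\sigma(B)$. Assembling the pieces,
\begin{equation*}
a_i=(-1)^{i}\sum_{B\in\mathcal{B}_i}(-1)^{i-p(B)}\,2^{|C(B)|}\,\sigma(B)=\sum_{B\in\mathcal{B}_i}(-1)^{p(B)}\,2^{|C(B)|}\,\sigma(B),
\end{equation*}
which is the claimed identity. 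The main obstacle is purely one of sign bookkeeping: three independent sign sources—the $(-1)^{i}$ from the principal-minor expansion, the permutation sign $\operatorname{sgn}(\pi)$, and the product of edge signs $\sigma$—must be tracked together, and one has to verify that both orientations of each long cycle yield the same edge-sign product so that the factor $2^{|C(B)|}$ separates cleanly from $\sigma(B)$; once these identities are pinned down, the rest is a routine reorganisation of the sum.
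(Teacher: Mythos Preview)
Your argument is correct and is precisely the standard Sachs-type derivation: principal-minor expansion of $\det(xI-A(\Gamma))$, Leibniz on each $\det A(\Gamma)[S]$, identification of fixed-point-free permutations with oriented basic figures, and the sign bookkeeping $(-1)^{i}\cdot(-1)^{i-p(B)}=(-1)^{p(B)}$ together with $\prod A_{v,\pi(v)}=\sigma(B)$ independent of orientation. There is nothing to compare against in the present paper, since the theorem is quoted from \cite{Belardo2} and is not re-proved here; your write-up would serve as a self-contained proof of the cited result.
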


 A  $2$-regular  subgraph $H$ of $\Gamma=(G, \sigma)$ is  the disjoint  union of some cycles in  $\Gamma$, which  is called  \emph{even} if  $|H|$ is even, and  \emph{odd} otherwise. Denote by
$\mathcal{H}$   the set of  $2$-regular subgraphs of  $\Gamma$. Let
$\mathcal{H}_0=\{H\in \mathcal{H}\mid |H|~\text{is~even}\}$, $\mathcal{H}_1=\{H\in \mathcal{H}\mid |H|~\text{is~odd}\}$,
$\mathcal{H}_0^+=\{H\in \mathcal{H}_0\mid \sigma(H)=1\} $, and $\mathcal{H}_0^-=\{H\in \mathcal{H}_0\mid \sigma(H)=-1\} $. Similarly,
$\mathcal{H}_1^+ $ and $\mathcal{H}_1^-$ denote the sets of positive and negative  $2$-regular subgraphs in $\mathcal{H}_1$, respectively.
 Obviously,  we have $\mathcal{H}=\mathcal{H}_0\cup\mathcal{H}_1$,  $\mathcal{H}_0=\mathcal{H}_0^+\cup\mathcal{H}_0^-$, and  $\mathcal{H}_1=\mathcal{H}_1^+\cup\mathcal{H}_1^-$.

A \emph{$k$-matching} of a  graph is a set of $k$ independent edges. The \emph{matching polynomial} of a graph $G$, denoted by $M(G, x)$,  is defined by
\begin{equation}\label{MM-eq-2}
M(G, x)=\sum_{i=0}^{m(G)}
(-1)^im_i(G)x^{n-2i},
\end{equation}
where $m(G)$ is the size of the largest matching in $G$, and $m_i(G)$ is the number of
$i$-matchings in $G$.
If $\Gamma'\subseteq\Gamma$, we write $\Gamma-\Gamma'$ for the signed graph induced by $V(\Gamma)\setminus V(\Gamma')$.
The characteristic polynomial and the matching polynomial of a signed graph  have the following  relationship.

\begin{thm}[$\!\!$\cite{Belardo2}]\label{MM-pro-1}
The characteristic polynomial of a signed graph $\Gamma=(G, \sigma)$  can be expressed as
$$\phi(\Gamma, x) = M(G, x) + \sum_{C\in \mathcal{H}}
\sigma(C)(-2)^{p(C)}M(G-C, x).$$
\end{thm}

\begin{thm}\label{Mthm-1}
Let $\Gamma=(G, \sigma)$ be a  signed graph. Then $\Gamma$ has a symmetric spectrum if and only if
\begin{equation}\label{MM-eq-4}
\sum_{C\in \mathcal{H}_1^+}
(-2)^{p(C)}M(G-C, x)=\sum_{C\in \mathcal{H}_1^-}
(-2)^{p(C)}M(G-C, x).
\end{equation}
\end{thm}
\begin{proof}
By Theorem \ref{MM-pro-1}, we have
$$\phi(\Gamma, x) = M(G, x) + \sum_{C\in \mathcal{H}_0}
\sigma(C)(-2)^{p(C)}M(G-C, x)+ \sum_{C\in \mathcal{H}_1}
\sigma(C)(-2)^{p(C)}M(G-C, x).$$
 By (\ref{MM-eq-2}), it is easy to see that each term of $M(G, x)$ contributes to a coefficient of   the form $a_{2i}x^{n-2i}$ in  $\phi(\Gamma,x)$. Moreover,
\begin{eqnarray*}
 \sum_{C\in \mathcal{H}_0}\sigma(C)(-2)^{p(C)}M(G-C, x) &=& \sum_{C\in \mathcal{H}_0}\sum_{j\ge0}(-2)^{p(C)}\sigma(C)(-1)^jm_j(G-C)x^{|G-C|-2j} \\
  &=& \sum_{C\in \mathcal{H}_0}\sum_{j\ge0}(-2)^{p(C)}\sigma(C)(-1)^jm_j(G-C)x^{|G|-(|C|+2j)}.
\end{eqnarray*}
Since  $C\in \mathcal{H}_0$,  by Theorem \ref{coeff-thm-1}, $\sigma(C)(-2)^{p(C)}(-1)^j$ $m_j(G-C)$ only contributes to some coefficient of the form $a_{2i}x^{n-2i}$ in $\phi(\Gamma,x)$.
Similarly,  we have
\begin{equation}\label{MC-eq-1}
\sum_{C\in \mathcal{H}_1}
\sigma(C)(-2)^{p(C)}M(G-C, x)=\sum_{C\in \mathcal{H}_1}\sum_{j\ge 0}\sigma(C)(-2)^{p(C)}(-1)^jm_j(G-C)x^{|G|-(|C|+2j)}.
\end{equation}
 $|C|+2j$ is odd because of $C\in \mathcal{H}_1$. According to  Theorem \ref{coeff-thm-1}, each  term $\sigma(C)(-2)^{p(C)}(-1)^j$ $m_j(G-C)$ in (\ref{MC-eq-1}) contributes to a coefficient  of the form $a_{2i+1}x^{n-(2i+1)}$ in $\phi(\Gamma,x)$. By Theorem \ref{spec-sym-cor-1}, 
 the spectrum of $\Gamma$ is symmetric if and only if  $\sum_{C\in \mathcal{H}_1}
\sigma(C)(-2)^{p(C)}M(G-C, x)=0$. It follows that $\Gamma$ is spectrally symmetric  if and only if (\ref{MM-eq-4}) holds.
\end{proof}

\begin{figure}[h]
  \centering
  \includegraphics[width=3cm]{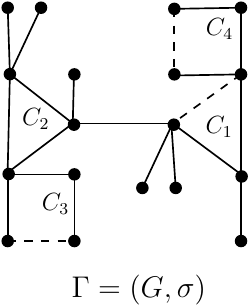}\\
  \caption{\small $\Gamma$ is spectrally symmetric  but  not sign-symmetric. Here and in the remaining figures  negative edges
are represented by dash lines.}\label{figure11}
\end{figure}
\begin{example}\label{ex-con-10}
In Fig.$\!$ \ref{figure11}, it can be verified that $G-C_1\cong G-C_2$ and $G-C_1-C_3\cong G-C_2-C_4$. Thus,  \eqref{MM-eq-4} holds for the signed graph $\Gamma$.  From Theorem \ref{Mthm-1},  $\Gamma$ has a symmetric spectrum. In fact,   $\mathrm{Spec}(\Gamma)=\{\pm 2.821,  \pm 2, \pm 1.792, \pm 1.414^{[2]}, \pm 1, \pm 0.737, \pm 0.537, 0^{[2]}\}$. Moreover, we  observe that $\Gamma$ is not sign-symmetric.
\end{example}

Theorem \ref{Mthm-1} states that $\Gamma=(G,\sigma)$ is  spectrally symmetric if  $G$ is a  bipartite graph, as bipartite graphs contain no 2-regular subgraphs.

\begin{cor}\label{l=l-cor-0}
Let $G$ be  a unicyclic graph and $\Gamma=(G, \sigma)$, then $\Gamma$ has a  symmetric spectrum if and only if $G$ has  an even cycle.
\end{cor}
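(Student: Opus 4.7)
The plan is to apply Theorem \ref{Mthm-1} directly, using the fact that a unicyclic graph has an extremely restricted family of $2$-regular subgraphs. Specifically, since any $2$-regular subgraph of $G$ is a disjoint union of cycles and $G$ contains exactly one cycle, call it $C$, the set $\mathcal{C}(\Gamma)$ consists of the single element $C$. Hence the odd-part of $\phi(\Gamma,x)$ reduces to a single term (or is empty), making the characterization a two-case check.

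First I would treat the case when $C$ is an even cycle. Then $\mathcal{C}_1(\Gamma)=\emptyset$, so the sum $\sum_{C'\in\mathcal{C}_1}\sigma(C')(-2)^{p(C')}M(G-V(C'),x)$ is vacuously zero. By Theorem \ref{Mthm-1}, $\Gamma$ is spectrally symmetric. (Of course this also follows because such a $G$ is bipartite, but we do not need that observation.)

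Next I would treat the case when $C$ is an odd cycle. Now $\mathcal{C}_1(\Gamma)=\{C\}$, and the odd-part of $\phi(\Gamma,x)$ collapses to the single term $\sigma(C)(-2)M(G-V(C),x)$. Since $\sigma(C)\in\{+1,-1\}$, the vanishing of this expression is equivalent to $M(G-V(C),x)\equiv 0$. But the matching polynomial of any graph (including the empty graph, where we set $M\equiv 1$) has leading coefficient $1$ and is therefore nonzero as a polynomial. Thus the odd-part is nonzero, and by Theorem \ref{Mthm-1}, $\Gamma$ fails to be spectrally symmetric.

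The only potentially subtle point is ensuring that $M(G-V(C),x)$ really cannot vanish even when $V(C)=V(G)$; this is handled by the convention that the matching polynomial of the empty graph is $1$. Combining the two cases yields the equivalence: $\Gamma=(G,\sigma)$ has a symmetric spectrum if and only if the unique cycle of $G$ is even.
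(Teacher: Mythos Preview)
Your proof is correct and follows exactly the same route as the paper, which simply observes in one line that the odd-part of $\phi(\Gamma,x)$ vanishes if and only if the unique cycle is even. You have merely spelled out the two cases and the nonvanishing of the matching polynomial in more detail than the paper does.
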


\begin{cor}\label{l=l-cor-1}
Let $G$ be a bicyclic graph and $\Gamma=(G,\sigma)$. $\Gamma$ has  a symmetric spectrum if and only if  one of the following holds:\\
 (i) the cycles of $\Gamma$ are  even, or\\
 (ii) there are two odd cycles $C_1$ and $C_2$  such that $M(G-C_1,x)=M(G-C_2,x)$ and  $\sigma(C_1)\sigma(C_2)=-1$.
\end{cor}

Based on Corollary \ref{l=l-cor-1} (ii),
we can construct an infinite
family of non-bipartite bicyclic signed graphs with  symmetric spectra:
First, take   two  forests $F_1$ and $F_2$ with $\mathrm{Spec}(F_1)$ $=\mathrm{Spec}(F_2)$, which  implies that they have the same matching polynomial by Theorem \ref{MM-pro-1}. Next, construct
 a bicyclic signed graph $\Gamma=(G, \sigma)$ with two  odd cycles $C_1$ and $C_2$ of opposite signs such that  $G-C_1\cong F_1$ and $G-C_2\cong F_2$. It follows that
\begin{equation}\label{MM-eq-5}
M(G-C_1, x)= M(G-C_2, x).
\end{equation}
According to Corollary \ref{l=l-cor-1} (ii), $\Gamma$ is spectrally symmetric. Moreover, if   $F_1$ and $F_2$ are two non-isomorphic forests, then $\Gamma$ is  not sign-symmetric.
To explain the construction, we give the following example.

\begin{figure}[h]
  \centering
  \includegraphics[width=11cm]{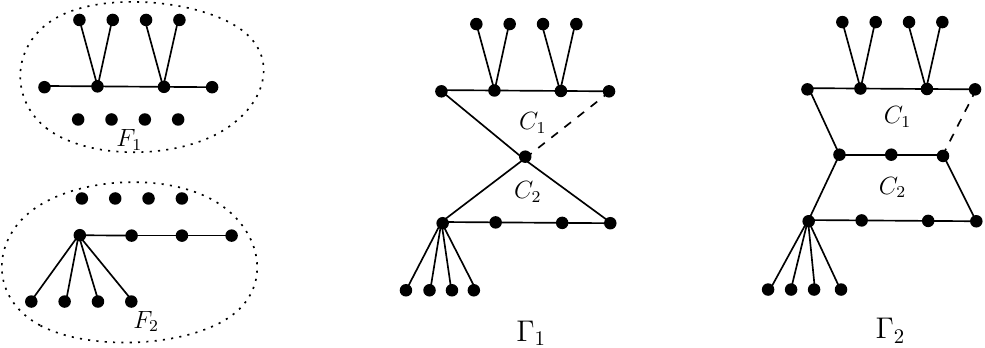}\\
  \caption{\small $\Gamma_1$ and $\Gamma_2$ are two  signed graphs that are spectrally symmetric but not sign-symmetric.}\label{figM}
\end{figure}
 \begin{example}\label{ex1}
The  forests $F_1$ and $F_2$  shown in Fig.$\!$ \ref{figM} have the same spectrum     $\mathrm{Spec}(F_1)=\mathrm{Spec}(F_2)=\{\pm 2.303, \pm 1.303, 0^{[8]}\}$,  the bicyclic signed graphs $\Gamma_1$ and $\Gamma_2$ satisfy (\ref{MM-eq-5}).  So $\Gamma_1$ and $\Gamma_2$ are  spectrally symmetric.
 We calculate that $\mathrm{Spec}(\Gamma_1)=\{\pm 2.711, \pm 2.303, \pm 1.702,$ $ \pm 1.303, \pm 0.8667, 0^{[7]}\}$ and $\mathrm{Spec}(\Gamma_2)=\{\pm2.624,\pm 2.369,\pm1.887,\pm1.461,\pm1.121,\pm0.744,$ $0^{[7]}\}$. Furthermore, $\Gamma_1$ and $\Gamma_2$ are not sign-symmetric  since $F_1$ and $F_2$ are not isomorphic.

\end{example}


\section{Signed graphs with  sign symmetry  }
 In this section, we will give a sufficient and  necessary condition for sign-symmetric signed graphs.

\begin{lem}[$\!$\cite{Zaslavsky1}]\label{thm-1}
Let $\Gamma$ and $\Gamma'$  be two signed graphs with the same underlying graph. Then $\Gamma\sim\Gamma'$ if and only if they have the same  positive cycles.
\end{lem}
\begin{lem}\label{FF-lem-2}
If $\Gamma\simeq\Gamma'$, then $\Gamma$ is sign-symmetric if and only if $\Gamma'$ is sign-symmetric.
 \end{lem}
 \begin{proof}
Since $\Gamma\simeq\Gamma'$, there is a $(0, 1, -1)$-permutation matrix $P$ such that $A(\Gamma)=PA(\Gamma')P^\top$.  Assume that $\Gamma$ is sign-symmetric. There exists a $(0, 1, -1)$-permutation matrix $P'$ such that $P'A(\Gamma)(P')^\top=-A(\Gamma)$. So we have $(P^\top P'P)A(\Gamma')(P^\top P'P)^\top=-A(\Gamma')$. It follows that $\Gamma'$ is sign-symmetric. Similarly, we obtain that $\Gamma$ is sign-symmetric if $\Gamma'$ is sign-symmetric.
 \end{proof}

By Lemmas \ref{thm-1} and \ref{FF-lem-2}, two switching equivalent (switching isomorphic) signed graphs  are identical when we consider the sign symmetry.


Let $\Gamma=(G, \sigma)$ be a signed graph.
Let $\mathcal{C}(\Gamma)$ (or simply $\mathcal{C}$ when no confusion arises) be   the set of cycles of  $\Gamma$. $\mathcal{C}$ can be decomposed into two parts: $\mathcal{C}_0=\{C\in \mathcal{C} \mid  |C|~ \text{is~ even}\}$ and $\mathcal{C}_1=\{C\in \mathcal{C}\mid |C|~ \text{ is ~odd}\}$.
Let $\mathcal{C}_0^+=\{C\in \mathcal{C}_0 \mid   \sigma(C)=1\}$ and $\mathcal{C}_0^-=\{C\in \mathcal{C}_0 \mid   \sigma(C)=-1\}$; $\mathcal{C}_1^+=\{C\in \mathcal{C}_1 \mid   \sigma(C)=1\}$ and $\mathcal{C}_1^-=\{C\in \mathcal{C}_1 \mid   \sigma(C)=-1\}$.
\begin{defi}\label{CC-de-1}
Let $\Gamma=(G, \sigma)$ be a  signed graph. An automorphism $\varphi\in Aut(G)$  is called a \emph{weak automorphism}  of  $\Gamma$ if $\varphi$ satisfies
$$\sigma(\varphi(C))=\left\{\begin{array}{ll}
-\sigma(C)& \text{ for~ any~ cycle}~  C\in \mathcal{C}_1,\\
 \sigma(C)& \text{ for~ any~ cycle }~ C\in \mathcal{C}_0.\\
 \end{array}\right.$$
\end{defi}
For a  bipartite signed graph, we have $\mathcal{C}_1^+=\emptyset=\mathcal{C}_1^-$.
 So, a  bipartite signed graph always has a weak automorphism $\iota\in Aut(G)$, where $\iota$ denotes the identity.
As $\varphi\in Aut(G)$, we have  $\varphi(\mathcal{C}_1)=\mathcal{C}_1=\mathcal{C}_1^-\cup \mathcal{C}_1^+$. Thus, $\varphi(\mathcal{C}_1^+)=\mathcal{C}_1^-$ if and only if   $\varphi(\mathcal{C}_1^-)=\mathcal{C}_1^+$. Similarly, $\varphi(\mathcal{C}_0^+)=\mathcal{C}_0^+$ if and only if $\varphi(\mathcal{C}_0^-)=\mathcal{C}_0^-$. It immediately follows the result.
\begin{lem}\label{lemma-33}
$\varphi\in Aut(G)$  is  a weak automorphism  of   $\Gamma=(G, \sigma)$ if  and only if $\varphi(\mathcal{C}_1^+)=\mathcal{C}_1^-$ and $\varphi(\mathcal{C}_0^+)=\mathcal{C}_0^+$ for some $\varphi\in Aut(G)$.
\end{lem}

Let $T$ be  a spanning tree of a graph $G$. Each edge $e\in E(G)\backslash E(T)$ forms a unique cycle  with some edges in  $T$. Such a cycle  is called a \emph{base cycle} of $G$ with respect to  $T$ and  denoted by $C_G(T,e)$.     The  base cycle of $\Gamma=(G, \sigma)$ is defined similarly and denoted by $C_{\Gamma}(T,e)$.
Let
 $\mathcal{B}(\Gamma, T)=\{C_{\Gamma}(T,e)\mid e\in E( \Gamma)\backslash E(T)\}$ be the set of base cycles of $\Gamma$,
 $\mathcal{B}^{+}(\Gamma, T)$ and $\mathcal{B}^{-}(\Gamma, T)$ be the sets of positive and negative base cycles, respectively.
Let
$\mathcal{B}_0(\Gamma, T)=\{C\in\mathcal{B}(\Gamma, T) \mid |C|~\text{is~ even}\}$ and  $\mathcal{B}_1(\Gamma, T)=\{C\in\mathcal{B}(\Gamma, T) \mid |C|~\text{is~ odd}\}$. Let $\mathcal{B}_0^+(\Gamma, T)=\{C\in\mathcal{B}_0(\Gamma, T) \mid \sigma(C)=1\}$ and $\mathcal{B}_0^-(\Gamma, T)=\{C\in\mathcal{B}_0(\Gamma, T) \mid \sigma(C)=-1\}$. $\mathcal{B}_1^+(\Gamma, T)$, $\mathcal{B}_1^-(\Gamma, T)$ are similarly defined.
Without causing confusion, these symbols are abbreviated as
 $\mathcal{B}(T)$, $\mathcal{B}^{+}(T)$, $\mathcal{B}^{-}( T)$, $\mathcal{B}_0(T)$, $\mathcal{B}_1( T)$, $\mathcal{B}_0^+(T)$, $\mathcal{B}_0^-(T)$, $\mathcal{B}_1^+(T)$ and $\mathcal{B}_1^-(T)$, respectively. One can easily verify that
 \begin{lem}\label{lemma-36}
 Let $T$ be a  spanning tree of $\Gamma=(G,\sigma)$, and $\varphi\in Aut(G)$  with $\varphi(T)=T'$. Then\\
(i) $\varphi(\mathcal{B}_1^+(T))=\mathcal{B}_1^-( T')$ if and only if  $\varphi(\mathcal{B}_1^-( T))=\mathcal{B}_1^+( T')$,\\
(ii) $\varphi(\mathcal{B}_0^+( T))=\mathcal{B}_0^+( T')$ if and only if $\varphi(\mathcal{B}_0^-( T))=\mathcal{B}_0^-( T')$.
\end{lem}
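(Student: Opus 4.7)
The plan is to mimic the argument of Lemma \ref{lemma-33}, with the set $\mathcal{C}$ of $2$-regular subgraphs replaced by the set $\mathcal{B}(\Gamma,T)$ of spanning cycles with respect to a tree. The starting observation is that an automorphism $\varphi\in Aut(G)$ sending the spanning tree $T$ to the spanning tree $T'$ induces, in a natural way, a bijection of spanning cycles. Indeed, $\varphi$ restricts to a bijection from $E(G)\setminus E(T)$ to $E(G)\setminus E(T')$ via $e\mapsto\varphi(e)$, and for each chord $e$ the image $\varphi(C_{\Gamma}(T,e))$ is precisely $C_{\Gamma}(T',\varphi(e))$, because both are cycles in $G$ formed by $\varphi(e)$ together with the unique $\varphi(e)$-avoiding path in $\varphi(T)=T'$. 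In particular $|\varphi(C_{\Gamma}(T,e))|=|C_{\Gamma}(T,e)|$, so $\varphi$ preserves cycle length and hence parity.

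Once this bijection is established, I would split it according to parity and sign. Since length is preserved, $\varphi$ restricts to bijections
\begin{equation*}
\varphi:\mathcal{B}_1(T)\longrightarrow\mathcal{B}_1(T'),\qquad \varphi:\mathcal{B}_0(T)\longrightarrow\mathcal{B}_0(T'),
\end{equation*}
and each target is the disjoint union of its positive and negative parts, e.g.\ $\mathcal{B}_1(T')=\mathcal{B}_1^+(T')\sqcup \mathcal{B}_1^-(T')$.

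For part (i), assuming $\varphi(\mathcal{B}_1^+(T))=\mathcal{B}_1^-(T')$, injectivity of $\varphi$ together with the identity $\varphi(\mathcal{B}_1(T))=\mathcal{B}_1(T')$ forces
\begin{equation*}
\varphi(\mathcal{B}_1^-(T))=\mathcal{B}_1(T')\setminus\varphi(\mathcal{B}_1^+(T))=\mathcal{B}_1(T')\setminus\mathcal{B}_1^-(T')=\mathcal{B}_1^+(T'),
\end{equation*}
and the reverse implication is entirely symmetric. For part (ii), the same set-theoretic bookkeeping applies verbatim to $\mathcal{B}_0(T)=\mathcal{B}_0^+(T)\sqcup\mathcal{B}_0^-(T)$ and its image under $\varphi$.

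There is not really a substantial obstacle here; the only point that needs care is the identification $\varphi(C_{\Gamma}(T,e))=C_{\Gamma}(T',\varphi(e))$, which rests on the uniqueness of the cycle formed by a chord and a spanning tree. Once this is in place, both (i) and (ii) reduce to the purely combinatorial observation that a bijection between two sets, each written as a disjoint union of two labelled subsets, swaps one pair of labels on the source if and only if it swaps the other.
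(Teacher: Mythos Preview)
Your proposal is correct and follows exactly the approach the paper indicates: the paper simply says ``similar to the proof of Lemma~\ref{lemma-33}, we easily verify'' the result, and your argument carries out precisely that adaptation. The extra care you take in justifying the identification $\varphi(C_{\Gamma}(T,e))=C_{\Gamma}(T',\varphi(e))$ is a detail the paper leaves implicit, but the core set-theoretic step is identical.
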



\begin{lem}\label{lemma-34}
Let $\Gamma=(G,\sigma)$ be a signed graph and $\varphi\in Aut(G)$ such that  $\varphi(T)=T'$ for a  spanning tree $T$ of $\Gamma$. Then
$\varphi(\mathcal{C}_1^+)=\mathcal{C}_1^-$ and $\varphi(\mathcal{C}_0^+)=\mathcal{C}_0^+$ if and only if   $\varphi(\mathcal{B}_1^+( T))=\mathcal{B}_1^-( T')$ and $\varphi(\mathcal{B}_0^+( T))=\mathcal{B}_0^+( T')$.
\end{lem}
\begin{proof}
Suppose that $\varphi(\mathcal{C}_1^+)=\mathcal{C}_1^-$ and $\varphi(\mathcal{C}_0^+)=\mathcal{C}_0^+$. Obviously, $\varphi(\mathcal{B}_1^+( T))=\mathcal{B}_1^-( T')$ and $\varphi(\mathcal{B}_0^+( T))=\mathcal{B}_0^+( T')$ because of $\varphi(T)=T'$.

Conversely, suppose that $\varphi(\mathcal{B}_1^+( T))=\mathcal{B}_1^-( T')$ and $\varphi(\mathcal{B}_0^+( T))$ $=\mathcal{B}_0^+( T')$. Any cycle $C\in \mathcal{C}_1^+$ can be uniquely generated by some base cycles in $\mathcal{B}(T)$, say,
\begin{equation}\label{CC-eq-11}
C =(C_{\Gamma}(T,e_1)\oplus\cdots\oplus C_{\Gamma}(T,e_{t_1}))\oplus(C_{\Gamma}(T,e_{t_1+1})\oplus\cdots\oplus C_{\Gamma}(T,e_{t})),
\end{equation}
where $\oplus$ represents set symmetry difference, and $C_{\Gamma}(T,e_1),\ldots, C_{\Gamma}(T,e_{t_1})$  are odd base cycles and $C_{\Gamma}(T,e_{t_1+1}),\ldots,C_{\Gamma}(T,e_{t})$ are even base cycles.
Clearly, we have $\varphi(C_{\Gamma}(T,e_i))=C_{\Gamma}(T',e_i')$ where $\varphi(e_i)=e_i'$ for $i=1, \ldots, t$.
It follows that
\begin{equation}\label{CC-eq-12}
\varphi(C)=\oplus_{i=1}^t\varphi(C_{\Gamma}(T,e_i))=
\oplus_{i=1}^tC_{\Gamma}(T',e_i').
\end{equation}
Furthermore, assume that  the first $t_1'\le t_1$ odd base cycles  are negative and the first $t_1''\le t-t_1$ even base cycles are negative in \eqref{CC-eq-11}. Since $C$ is positive, so $t_1'+t_1''$ is an even integer which implies that $t_1'-t_1''$ is  even.
According to our assumption,  we have $\sigma(C_{\Gamma}(T',e_i'))=-\sigma(C_{\Gamma}(T,e_i))$ for  $i=1, \ldots, t_1$, and  $\sigma(C_{\Gamma}(T',e_{i}'))=\sigma(C_{\Gamma}(T,e_i))$ for $i=t_1+1, \ldots, t$.  By Lemma \ref{lemma-36}, there are exactly $t_1-(t_1'-t_1'')$ negative base cycles  in $C_{\Gamma}(T',e_i')$ ($i=1, \ldots, t$) of \eqref{CC-eq-12}. Notice that $t_1$ must be an odd integer because $C$ is  odd. So, $t_1-(t_1'-t_1'')$ is  odd.  Thus, we have $\sigma(\varphi(C))=-1$.
 It follows that $\varphi(C)\in \mathcal{C}_1^-$, i.e., $\varphi(\mathcal{C}_1^+)=\mathcal{C}_1^-$.
For even cycle set of $\Gamma$,   similarly, we obtain that $\varphi(\mathcal{C}_0^+)=\mathcal{C}_0^+$.

It completes the proof.
\end{proof}
From Lemmas \ref{lemma-33} and  \ref{lemma-34}, we obtain immediately
\begin{thm}\label{cor-exchan-1}
Let $\Gamma=(G,\sigma)$ be a signed graph and $\varphi\in Aut(G)$ such that $\varphi(T)=T'$ for a  spanning tree $T$ of $G$. Then $\varphi$ is a weak automorphism of $\Gamma$  if and only if $\varphi(\mathcal{B}_1^+(T))=\mathcal{B}_1^-( T')$ and $\varphi(\mathcal{B}_0^+( T))=\mathcal{B}_0^+( T')$.
\end{thm}

\begin{lem}\label{conver-lem-1}
Let $\Gamma=(G,\sigma)$ and $\Gamma'=(G',\sigma')$ be two signed graphs. If there exits an isomorphism $\varphi:G\longrightarrow G'$ such that $\varphi(\mathcal{C}_1^+(\Gamma))=\mathcal{C}_1^-(\Gamma')$ and $\varphi(\mathcal{C}_0^+(\Gamma))=\mathcal{C}_0^+(\Gamma')$, then $\varphi(-\Gamma)\sim \Gamma'$, i.e., $\Gamma'\simeq-\Gamma$.
\end{lem}
\begin{proof}
Let $T$ be a spanning tree of  $G$. Then  $\varphi(T)=T'$ is  a spanning tree of $G'$. It is easy to see that there exists $\Gamma''=(G',\sigma'')\in [\Gamma']$  such that
\begin{equation}\label{CC-eq-18}
 \sigma''(\varphi(e))=-\sigma(e)
\end{equation}
 for any edge $e\in T$.
We will  distinguish some situations to prove that $\sigma''(e')=-\sigma(e)$ for each $e\in E(G)\backslash E(T)$, where $e'=\varphi(e)$.

Suppose that  $C_\Gamma(T,e)\in \mathcal{C}_1^+(\Gamma)$. By assumption, we have $\varphi(C_\Gamma(T,e))=C_{\Gamma'}(T',e')\in  \mathcal{C}_1^-(\Gamma')$ and so $C_{\Gamma''}(T',e')\in  \mathcal{C}_1^-(\Gamma'')$ by Lemma \ref{thm-1}. According to \eqref{CC-eq-18}, it follows that       $e'$ is a positive edge of $\Gamma''$ if $e$ is a negative edge of $\Gamma$, and $e'$ is a negative edge of $\Gamma''$ if $e$ is a positive edge of $\Gamma$. That is $\sigma''(e')=-\sigma(e)$.
Similarly,
we can verify  that $\sigma''(e')=- \sigma(e)$  when $C_\Gamma(T,e)\in \mathcal{C}_1^-(\Gamma)$.

Suppose that $C_\Gamma(T,e)\in \mathcal{C}_0^+(\Gamma)$. We have $\varphi(C_\Gamma(T,e))=C_{\Gamma'}(T',e')\in  \mathcal{C}_0^+(\Gamma')$ and so $C_{\Gamma''}(T',e')\in  \mathcal{C}_0^+(\Gamma'')$ by Lemma \ref{thm-1}. Again according to \eqref{CC-eq-18}, we also get $\sigma''(e')=-\sigma(e)$. Similarly,
we have $\sigma''(e')=-\sigma(e)$  when $C_\Gamma(T,e)\in \mathcal{C}_0^-(\Gamma)$.

From the above discussions,  the isomorphism $\varphi: G \rightarrow G'$ maps  the edge $e\in E(\Gamma)$ to the edge $\varphi(e)\in E(\Gamma'')$  with opposite signs. It implies that
  $\varphi(-\Gamma)=\Gamma''\sim \Gamma'$, i.e.,  $-\Gamma\simeq\Gamma'$.
\end{proof}
 Now,  we present an equivalent condition for sign-symmetric signed graphs.

\begin{thm}\label{C-p-4}
The signed graph $\Gamma=(G, \sigma)$ is sign-symmetric  if and only if  $\Gamma$  has a weak automorphism if  and only if $\varphi(\mathcal{C}_1^+)=\mathcal{C}_1^-$ and $\varphi(\mathcal{C}_0^+)=\mathcal{C}_0^+$ for some $\varphi\in Aut(G)$.
\end{thm}
\begin{proof}
 $\Gamma$ has a weak automorphism if and only if $\varphi(\mathcal{C}_1^+)=\mathcal{C}_1^-$ and $\varphi(\mathcal{C}_0^+)=\mathcal{C}_0^+$ for some $\varphi\in Aut(G)$ by Lemma \ref{lemma-33}.

 We only need to prove that $\Gamma$ is sign-symmetric if and only if $\Gamma$ has a weak automorphism.
By setting $\Gamma'=\Gamma$ in Lemma \ref{conver-lem-1}, we immediately obtain  the sufficiency by Lemma \ref{lemma-33}.
Now, suppose that $\Gamma$ is sign-symmetric. Then there exist a switching $\Gamma'=(G, \sigma')\in [-\Gamma]$ and an isomorphism $f$  from $\Gamma$ to $\Gamma'$. Thus, $f(C_{\Gamma}(T,e))=C_{\Gamma'}(T',e')$, where $e'=f(e)$, $T$ and $T'=f(T)$ are two spanning trees of $G$.  Because $\Gamma$ and  $\Gamma'$ have the same underlying graph $G$, $f$ induces an automorphism $\varphi$ of $G$ that is just $f$ regardless of the signs of the edges of $\Gamma$ and $\Gamma'$. So, we have   $\varphi(C_{G}(T,e))=C_{G}(T',e')$.  If $C_{\Gamma}(T,e)\in \mathcal{B}_1^+(\Gamma, T)$, then $C_{\Gamma'}(T',e')\in \mathcal{B}_1^+(\Gamma', T')$  because $f$ is an isomorphism. Thus, the  cycle
  $C_{-\Gamma'}(T',e')$ of  $-\Gamma'$ is  negative  because of $C_{-\Gamma'}(T',e')= -C_{\Gamma'}(T',e')$. Since $\Gamma\in[-\Gamma']$ and $C_{-\Gamma'}(T',e')$ and $C_{\Gamma}(T',e')$ have the common underlying graph $C_{G}(T',e')$, so $C_{\Gamma}(T',e')$ is negative by Lemma \ref{thm-1}. It follows that   $C_{\Gamma}(T',e')\in\mathcal{B}_1^-(\Gamma, T') $ and so $\varphi(\mathcal{B}_1^+(\Gamma, T))=\mathcal{B}_1^-(\Gamma, T')$. Analogously,
if   $C_{\Gamma}(T,e)\in \mathcal{B}_0^+(\Gamma, T)$,
then  the cycle $C_{-\Gamma'}(T',e')$ of $-\Gamma'$ is positive.
Note that $C_{\Gamma}(T',e')$ and $C_{-\Gamma'}(T',e')$  have the same sign by Lemma \ref{thm-1}, we have $C_{\Gamma}(T',e')\in \mathcal{B}_0^+(\Gamma, T')$ and so
 $\varphi(\mathcal{B}_0^+(\Gamma, T))=\mathcal{B}_0^+(\Gamma, T')$. It follows that $\varphi$ is a weak   automorphism of $\Gamma$ by Theorem \ref{cor-exchan-1}.

We complete the proof.
\end{proof}

  Lemma \ref{thm-1} and Theorem \ref{C-p-4} imply the following result.
\begin{cor}\label{c-1}
$\Gamma=(G,\sigma)$ has a weak automorphism $\varphi$ if and only if $\varphi$ is a weak automorphism of $\Gamma'\in[\Gamma]$.
\end{cor}
According to Theorem \ref{C-p-4},  $\Gamma=(G,\sigma)$ is sign-symmetric if and only if  there exists a $\varphi\in Aut(G)$ such that   $C$ and $\varphi(C)$ have opposite signs for each odd cycle $C$ of $\Gamma$, and $C$ and $\varphi(C)$ have the same sign for each even cycle $C$ of $\Gamma$. So, we have
 \begin{lem}\label{wa-lem-1}
 Let $\Gamma=(G,\sigma)$ be a sign-symmetric signed graph with a weakly automorphism $\varphi$. Then $\varphi$ is of even order, i.e., $\varphi^{2r}=\iota$ for some positive integer $r$.
 \end{lem}
By Lemma \ref{wa-lem-1} we  give a feature for the underlying graph of a sign-symmetric graph.
\begin{cor}\label{C-p-4-1}
If  $\Gamma=(G,\sigma)$ is sign-symmetric, then $G$ has a transposition automorphism. In other words,   $\Gamma$ is not  sign-symmetric if  $G$ has no  transposition automorphism.
\end{cor}

Let  $G$ be a connected non-bipartite graph and   $\mathbb{S}(G)$  the set of all sign-symmetric signed graphs    on $G$.
A weak automorphism $\varphi$ of a signed graph $\Gamma=(G, \sigma)$ is called \emph{principal} if $\varphi$  has the smallest order among all weak automorphisms of $\Gamma$ (abbreviated as PWA-$\varphi$).
 For an integer $r\ge 1$, let
$$\mathbb{S}_r(G)=\{\Gamma\in\mathbb{S}(G)\mid \mbox{$\Gamma$ has    a PWA-$\varphi$  of order $2r$ } \}.$$
Clearly, we have $\mathbb{S}_i(G)\cap \mathbb{S}_j(G)=\emptyset$ for $i\not=j$ by the definition. By  Theorem \ref{C-p-4} and Lemma \ref{wa-lem-1}, we have
\begin{prop}\label{pro-1}
$\mathbb{S}(G)=\mathbb{S}_1(G)\cup \mathbb{S}_2(G)\cup\mathbb{S}_3(G)\cup\cdots$.
 \end{prop}
 \begin{figure}[h]
  \centering
  \includegraphics[width=13.5cm]{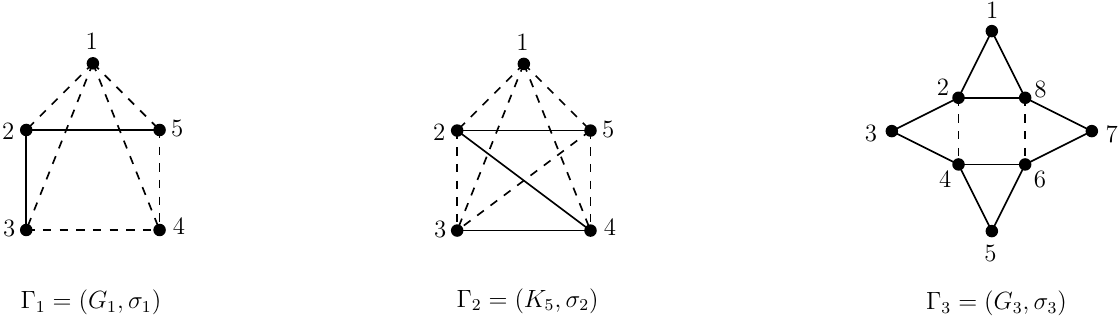}\\
  \caption{\small $\Gamma_1\in \mathbb{S}_1(G_1)$, $\Gamma_2\in \mathbb{S}_2(K_5),$ and $ \Gamma_3\in \mathbb{S}_1(G_3)$}\label{order4}
\end{figure}

At last of this section, we give Example \ref{ex3} to illustrate  Theorem \ref{C-p-4} and Proposition \ref{pro-1}.
 \begin{example}\label{ex3}
By Theorem \ref{cor-exchan-1} one can verify that $\Gamma_1=(G_1,\sigma_1)$ (see Fig.$\!$ \ref{order4}) has a weak automorphism $\varphi_1=(1)(2\ 4)$ $(3\ 5)$. By Theorem \ref{C-p-4}, $\Gamma_1$ is  sign-symmetric and thus $\Gamma_1\in \mathbb{S}_1(G)$.  $\Gamma_2$ and $\Gamma_3$ in Fig.$\!$ \ref{order4} are two  sign-symmetric graphs because
$\varphi_2=(1)(2\ 3\ 4\ 5)$ and $\varphi_3=(1\ 3\ 5\ 7)(2\ 4\ 6\ 8)$ are  respectively  weak  automorphisms  of  $\Gamma_2$ and  $\Gamma_3$. On the other aspect, $\varphi_3'=(1\ 3)(4\ 8)(5\ 7)(2)(6)$ is also a  weak  automorphism  on $\Gamma_3$. Hence, $\varphi_3'$ is a PWA-$\varphi$ of $\Gamma_3$ but not $\varphi_3$. Thus $\Gamma_3\in \mathbb{S}_1(G)$. At last, we claim that $\Gamma_2\in \mathbb{S}_2(G)$. Since otherwise, without loss of generality, $\Gamma_2$ has a  weak automorphism  $f=(1)(2 \ 3)(4\ 5)$. Then $C=123$ is a triangle  and $f(C)=C$. It  leads to a contradiction  whether $\sigma(C)=1$ or $\sigma(C)=-1$.
\end{example}
 In the next section, we focus on determining  the signed graphs in $\mathbb{S}_1(G)$.

\section{Complete characterization of the signed graphs  in $\mathbb{S}_1(G)$}

In this section, we always assume that   $\Gamma=(G,\sigma)\in \mathbb{S}_1(G)$ with a PWA-$\varphi$.
Then
$V(G)$ has a vertex partition
$V(G)=F\cup U_1\cup U_2$
such that $\varphi(U_1)=U_2$, $\varphi(U_2)=U_1$, and $F=\{v\in V(G)\mid \varphi(v)=v\}$. It follows that  $G[U_2]=\varphi(G[U_1])$ and the permutation \begin{equation}\label{CC-eq-21}\varphi=\prod_{u\in U_1}(u\ \varphi(u))\end{equation}
is a transposition with  \emph{fixed vertices} in $F$. For any $u\in U_1$,
 $u$ and $\varphi(u)$ are called a pair of \emph{symmetric vertices}.

 \begin{figure}[h]
  \centering
  \includegraphics[width=12cm]{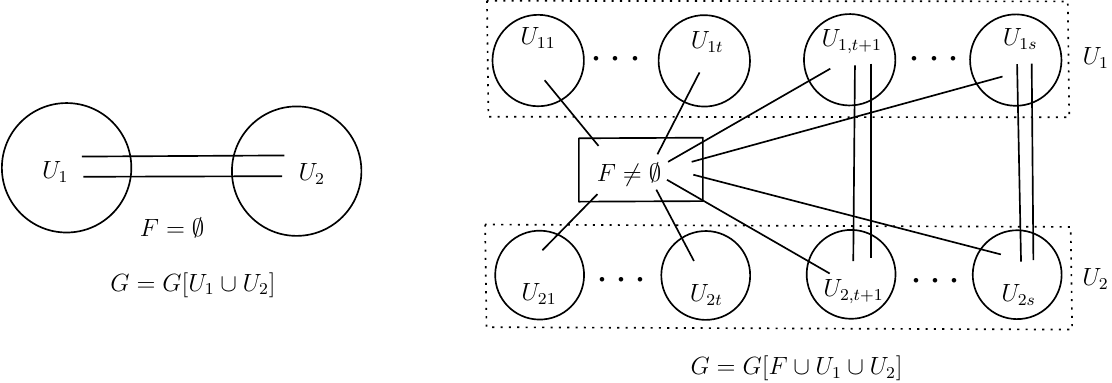}\\
  \caption{\small   $\Gamma=(G,\sigma)\in \mathbb{S}_1(G)$ has two types:  $F=\emptyset$ and $F\not=\emptyset$.}\label{figure47}
\end{figure}
Note that $G$ is  connected. If $F=\emptyset$, then $G$ consists of two isomorphic parts $G[U_1]$ and $ G[U_2]$, and  some   edges between them, which is shown in the left of Fig.$\!$ \ref{figure47}.
If $F\not=\emptyset$, then, by deleting the vertex set $F$ from $G$, we obtain some components in $G[U_1\cup U_2]$ depicted in the right of Fig.$\!$ \ref{figure47}.
\begin{clm}\label{clm-4-0}
Let $\Gamma=(G,\sigma)\in \mathbb{S}_1(G)$ be a signed graph with a PWA-$\varphi$.  \\
(i) There exists a vertex partition $U_i=\cup_{j=1}^s U_{ij}$ ($i=1,2$) such that $\varphi(G[U_{2j})=G[U_{1j}]$ is connected for  $j=1, \ldots, s$.\\
(ii) $G[U_1\cup U_2]$ is a union of components: $G[U_1\cup U_2]= (\cup_{j=1}^tG[U_{1j}])\cup (\cup_{j=1}^tG[U_{2j}])\cup(\cup_{j=t+1}^sG[U_{1j}\cup U_{2j}])$, where  $G_j=G[U_{1j}\cup U_{2j}]$ is called a \emph{symmetric factor} of $\Gamma$  for $j=1, \ldots, t$, and a \emph{reflexive factor} of $\Gamma$ for $j=t+1, \ldots, s$ (see  Fig.\ref{figure47}). Reflexive and symmetric factors are collectively referred to as $\varphi$-factors of $\Gamma$.
\end{clm}
From Claim \ref{clm-4-0}, a symmetric factor  contains two isomorphic components, while reflexive factor itself is a component in $G[U_1\cup U_2]$. Let $G_j=G[U_{1j}\cup U_{2j}]$ be a reflexive $\varphi$-factor. Denote by $E_{G_j}=E_{G_j}(U_{1j}, U_{2j})$ the set of edges between $U_{1j}$ and $ U_{2j}$ in $G_j$.
From (\ref{CC-eq-21}), the edge $e\in E_{G_j}$ can be presented by $e=u\varphi(v)$ for some $u,v\in U_{1j}$. $e$ is called a symmetric edge if $u\not=v$, and a reflexive edge if $u=v$. Denote by $E_{G_j}^s$ and $E_{G_j}^r$  the  symmetric edge set and reflexive edge set, respectively. Obviously, $u\varphi(v)\in E_{G_j}^s$ if and only if $\varphi(u)v\in E_{G_j}^s$. We call $u\varphi(v), v\varphi(u)\in E_{G_j}^s$  a pair of symmetric edges. An edge $e\in E_{G_j}$ is  a reflexive edge if and only if $\varphi(e)=e$.

For simplicity's sake, we always use $uv\in E_{G}(U, V)$ to denote the edge $uv$ satisfying $u\in U$ and $v\in V$.
Let  $\hat{G}_j=G[F\cup U_{1j}\cup U_{2j}]$. Clearly,  $E_{\hat{G}_j}(F, U_{1j})\not=\emptyset$ and  $E_{\hat{G}_j}(F, U_{2j})\not=\emptyset$ if $F\not=\emptyset$, and    $e=\omega u\in E_{\hat{G}_j}(F, U_{1j})$ if and only if  $e'=\varphi(e)=\omega\varphi(u)\in E_{\hat{G}_j}(F, U_{2j})$. Such  $e$ and $e'$ are called a pair of \emph{rotated edges}. Moreover, let $\Gamma_j=(G_j,\sigma)$ and $\hat{\Gamma}_j=  (\hat{G}_j,\sigma)$. In what follows, we will use the above symbols without declaration.
 \begin{clm}\label{clm-4-1}
Let $\Gamma=(G,\sigma)\in \mathbb{S}_1(G)$ be a signed graph with a PWA-$\varphi$. For $j=1, \ldots, s$, we have\\
 (i) $E_{G_j}=E_{G_j}^s\cup E_{G_j}^r$ if $G_j$ is a reflexive $\varphi$-factor;\\
  (ii)  $E_{\hat{G}_j}(F,U_{1j}\cup U_{2j})=E_{\hat{G}_j}(F, U_{1j})\cup E_{\hat{G}_j}(F, U_{2j})$ and $\varphi(E_{\hat{G}_j}(F, U_{1j}))=E_{\hat{G}_j}(F, U_{2j})$ if $G_j$ is a  symmetric $\varphi$-factor;\\
  (iii) the restrictions of  $\varphi$  on $\Gamma_j$ and $\hat{\Gamma}_j$ both are  weak automorphisms of order 2.\\
 (iv) $G[F]=(\Omega_1,\Omega_2)$ is a bipartite graph.
\end{clm}
It is worth mentioning that $G[U_1\cup U_2]$ itself is  a reflexive $\varphi$-factor  if it is connected. By   Lemmas \ref{thm-1},  \ref{FF-lem-2} and \ref{conver-lem-1}, we have
\begin{lem}\label{lem-4-2}
$\varphi(-\Gamma_j[U_{1j}])\sim\Gamma_j[U_{2j}]$ for $j=1, \ldots, s$.
\end{lem}
\begin{proof}
Let $\Gamma_{1j}=\Gamma[U_{1j}]=(G[U_{1j}],\sigma)$ and $\Gamma_{2j}=\Gamma[U_{2j}]=(G[U_{2j}],\sigma)$ be two subgraphs of $\Gamma_j$. By Claim \ref{clm-4-0} (i), $\varphi:G[U_{1j}]\longrightarrow G[U_{2j}]$ is an isomorphism   such that  $\varphi(G[U_{1j}])=G[U_{2j}]$. By Claim \ref{clm-4-1} (iii),  $\varphi$ is a weak automorphism of $\Gamma_j$. By Theorem \ref{C-p-4} we have $\varphi(\mathcal{C}_1^+(\Gamma_{1j}))=\mathcal{C}_1^-(\Gamma_{2j})$ and $\varphi(\mathcal{C}_0^+(\Gamma_{1j}))=\mathcal{C}_0^+(\Gamma_{2j})$. Thus, we have $\varphi(-\Gamma_j[U_{1j}])\sim\Gamma_j[U_{2j}]$ by Lemma \ref{conver-lem-1}.
\end{proof}
According to Claim \ref{clm-4-1} (iv), the adjacency matrix  $A(\hat{\Gamma}_j)$ of $\hat{\Gamma}_j=(\hat{G}_j,\sigma)$ can be expressed by
$$A(\hat{\Gamma}_j)=\left(\begin{array}{cc|cc}
 0 & A_{\Omega_1, \Omega_2} & A_{\Omega_1, U_{1j}} & A_{\Omega_1, U_{2j}}  \\
 A_{\Omega_1, \Omega_2}^\top & 0 & A_{\Omega_2, U_{1j}} & A_{\Omega_2, U_{2j}} \\ \hline
 A_{\Omega_1, U_{1j}}^\top & A_{\Omega_2, U_{1j}}^\top & A(\Gamma[U_{1j}]) & A_{U_{1j}, U_{2j}}  \\
  A_{\Omega_1, U_{2j}}^\top &  A_{\Omega_2, U_{2j}}^\top & A_{U_{1j}, U_{2j}}^\top & A(\Gamma[U_{2j}])
  \end{array} \right).$$
By Lemma \ref{lem-4-2}, we have $-\Gamma_j[U_{1j}] \sim \varphi(\Gamma_j[U_{2j}])$ due to $\varphi^2=\iota$. Thus, if  the vertices of $U_{2j}$ are labelled as $U_{2j}=\{\varphi(u)\mid u\in U_{1j}\}$, then there is a $(1,-1)$-diagonal matrix $D_j$ such that $-A(\Gamma_{j}[U_{1j}])=D_jA(\Gamma_{j}[U_{2j}])D_j$.
Setting $\tilde{D}_j=I_{|F|}\oplus(I_{|U_{1j}|}\oplus D_j)$ where $I_{|F|}$ is the identity matrix of order $|F|$, we have
$$\tilde{D}_jA(\hat{\Gamma}_j)\tilde{D}_j=
\left(\begin{array}{cc|cc}
 0 & A_{\Omega_1, \Omega_2} & A_{\Omega_1, U_{1j}} & A_{\Omega_1, U_{2j}} D_j  \\
 A_{\Omega_1, \Omega_2}^\top & 0 & A_{\Omega_2, U_{1j}} & A_{\Omega_2, U_{2j}} D_j  \\ \hline
 A_{\Omega_1, U_{1j}}^\top & A_{\Omega_2, U_{1j}}^\top & A(\Gamma[U_{1j}]) & A_{U_{1j}, U_{2j}} D_j  \\
   D_jA_{\Omega_1, U_{2j}}^\top &  D_j A_{\Omega_2, U_{2j}}^\top & D_j A_{U_{1j}, U_{2j}}^\top & -A(\Gamma[U_{1j}])
  \end{array}\right). $$
In general, for  $\Gamma=(G,\sigma)\in \mathbb{S}_1(G)$, let  $\tilde{D}=I_{|F|}\oplus (I_{|U_{11}|}\oplus\tilde{D}_1)\oplus\cdots\oplus (I_{U_{|1s|}}$ $\oplus\tilde{D}_s)$.  By Lemma \ref{lem-4-2}, we get a signed graph $\tilde{\Gamma}=(G,\tilde{\sigma})\simeq \Gamma$ via $A(\tilde{\Gamma})=\tilde{D}P A(\Gamma)P^\top\tilde{D}$,  where $P$ is a permutation matrix on $V(G)$ that relabels the vertices  $U_{2j}=\{\varphi(u)\mid u\in U_{1j}\}$ for  $j=1, \ldots, s$ and fixes the other vertices.
Thus, $\tilde{\Gamma}\in \mathbb{S}_1(G)$ by Lemma \ref{FF-lem-2}. It means that, up to switching isomorphism,     $\hat{\Gamma}_j=(G_j,\sigma)$  has the  adjacency matrix:
\begin{equation}\label{CC-eq-15}
A(\hat{\Gamma}_j)=\left(\begin{array}{cc|cc}
 0 & A_{\Omega_1, \Omega_2} & A_{\Omega_1, U_{1j}} & A_{\Omega_1, U_{2j}}   \\
 A_{\Omega_1, \Omega_2}^\top & 0 & A_{\Omega_2, U_{1j}} & A_{\Omega_2, U_{2j}}  \\ \hline
 A_{\Omega_1, U_{1j}}^\top & A_{\Omega_2, U_{1j}}^\top & A(\Gamma[U_{1j}]) & A_{U_{1j}, U_{2j}}   \\
   A_{\Omega_1, U_{2j}}^\top &   A_{\Omega_2, U_{2j}}^\top &  A_{U_{1j}, U_{2j}}^\top & -A(\Gamma[U_{1j}])
  \end{array} \right), \mbox{ where $\varphi(U_{1j})=U_{2j}$ }.
   \end{equation}
Therefore,   in the following, we will always assume that $\hat{\Gamma}_j=(\hat{G}_j,\sigma)$ has the adjacency matrix (\ref{CC-eq-15}). Notice that the edge signs of  $\Gamma[U_{1j}]$ and $\Gamma[\Omega_1\cup \Omega_2]$ may be arbitrary, so $\hat{\Gamma}_j$ is  determined by $A_{U_{1j}, U_{2j}}$ and $A_{\Omega_{i'}, U_{ij}}$ for $i', i\in\{1,2\}$.

  Let  $G_j=G[U_{1j}\cup U_{2j}]$ be a reflexive $\varphi$-factor.  We  respectively define two signed graphs $\Gamma_j^a=(G_j,\sigma^a)$ and $\Gamma_j^b=(G_j,\sigma^b)$ by their adjacency matrices
\begin{equation}\label{Gamma-j-eq-1}A(\Gamma_j^a)=\bordermatrix{
   &U_{1j} & U_{2j} \cr
U_{1j}& B&X \cr
 U_{2j}&X&-B  \cr}\ \ \mbox{ and }\ \ A(\Gamma_j^b)=\bordermatrix{
   &U_{1j} & U_{2j} \cr
U_{1j}& B&Y \cr
 U_{2j}&-Y&-B  \cr},
\end{equation}
where $B$ is a symmetric $(0,1,-1)$-matrix with a zero diagonal, $X=X^\top$ is a symmetric $(0,1,-1)$-matrix, and $Y=-Y^\top$ is an antisymmetric  $(0,1,-1)$-matrix. By setting  $P_1=\left(\begin{array}{cc}
0&I\\
-I&0
\end{array}\right)$ and $P_2=\left(\begin{array}{cc}
0&I\\
I&0
\end{array}\right)$, we can simply verify that
$P_1A(\Gamma^a_j)P_1^\top=-A(\Gamma^a_j)$ and
$P_2A(\Gamma^b_j)P_2^\top=-A(\Gamma^b_j)$. The following result is obtained.

\begin{lem}\label{lem-4-6}
$\Gamma_j^a$ and $\Gamma^b_j$ are sign-symmetric.
\end{lem}

We use $P_{u, v}$ to denote a path  between the vertices $u$ and $v$.
\begin{lem}\label{pro-4-2}
Let  $G_j=G[U_{1j}\cup U_{2j}]$ be a reflexive $\varphi$-factor. For two edges $e_1=u_1v_1,e_2=u_2 v_2\in E_{G_j}(U_{1j}, U_{2j})$, we have
$\prod_{i=1}^2\sigma(e_i)\sigma(\varphi(e_i))=1.$
\end{lem}
\begin{proof}
 $G[U_{1j}]$ and $G[U_{2j}]$ are connected by Claim \ref{clm-4-0} (i). Then there exist two paths $P_{u_2, u_1}\subseteq G[U_{1j}]$ and $P_{v_1, v_2}\subseteq G[U_{2j}]$ such that $C=P_{u_2, u_1}+u_1v_1+P_{v_1,v_2}+v_2 u_2$ is a cycle of $\Gamma_j$ containing $e_1$ and $e_2$.  Since $\Gamma[U_{2j}]=-\Gamma[U_{1j}]$ by (\ref{CC-eq-15}),  we  have $\sigma(\varphi(e))=-\sigma(e)$ for all $e\in E(\Gamma[U_{ij}])$, $i=1, 2$.  So, we have
\begin{equation}\label{CC-eq-20}
\begin{array}{ll}
\sigma(\varphi(C))&=\sigma(\varphi(e_1))
\sigma(\varphi(e_2))\cdot \prod_{e\in E(C)\setminus\{e_1,e_2\}}\sigma(\varphi(e))\\[0.2cm]
&=\sigma(\varphi(e_1))
\sigma(\varphi(e_2))\cdot(-1)^{|C|-2}\cdot\sigma(C)\sigma(e_1)\sigma(e_2)\\[0.2cm]
&=(-1)^{|C|-2}\sigma(C)\prod_{i=1}^2\sigma(e_i)\sigma(\varphi(e_i)).
\end{array}
\end{equation}
From Theorem \ref{C-p-4} and  (\ref{CC-eq-20}), the result holds  regardless of whether $C$ is an even cycle or an odd cycle.
\end{proof}

\begin{lem}\label{pro-4-1}
Let $\Gamma=(G,\sigma)\in \mathbb{S}_1(G)$ and $G_j=G[U_{1j}\cup U_{2j}]$ is a reflexive $\varphi$-factor.  Then $\Gamma_j\simeq\Gamma_j^a$ or $\Gamma_j^b$, where $\Gamma_j^a$ and $\Gamma_j^b$ are defined by (\ref{Gamma-j-eq-1}).
\end{lem}
\begin{proof}
Since $\Gamma=(G, \sigma)\in\mathbb{S}_1(G)$, from (\ref{CC-eq-15}),  $A(\Gamma_j)$ has the form
$$A(\Gamma_j)=\left(\begin{array}{cc}
A(\Gamma[U_{1j}])&A_{U_{1j},U_{2j}}\\
A_{U_{2j},U_{1j}} &-A(\Gamma[U_{1j}])
\end{array}\right).
$$
We  take $B$ as $A(\Gamma[U_{1j}])$. It remains to verify that $A_{U_{1j},U_{2j}}=(a_{u\varphi(u)})$ is either a symmetric or antisymmetric $(0,1,-1)$-matrix. First of all, if $E_{G_j}=E_{G_j}(U_{1j}, U_{2j})$ contains only one edge $e$, then $e$ is a   reflexive edge dose not included in any cycle of $\Gamma_j$. Obviously, $A_{U_{1j},U_{2j}}$ is a symmetric $(0,1,-1)$-matrix having only one $1$ or $-1$ at its diagonal. Thus we have $\Gamma_j\simeq \Gamma_j^a$ in this case.

Let $e_1, e_2\in E_{G_j}$ with $e_1\not=e_2$.
For $i=1,2$, $e_i$ and $e_i'=\varphi(e_i)$ form a pair of symmetric edges if $e_i\not=\varphi(e_i)$,  otherwise,  $e_i=\varphi(e_i)$ is a reflexive edge.
We get $E_{G_j}=E_{G_j}^s\cup E_{G_j}^r$ by
Claim \ref{clm-4-1} (i). From Lemma \ref{pro-4-2}, we have
\begin{equation}\label{sr-eq-1}1 =\prod_{i=1}^2\sigma(e_i)\sigma(\varphi(e_i))=\left\{\begin{array}{ll}
\sigma(e_1)^2\sigma(e_2)^2 & \mbox{ if $e_2=\varphi(e_1)$,}\\
\sigma(e_1)\sigma(e_1')\sigma(e_2)\sigma(e_2') & \mbox{ if $e_2\not=\varphi(e_1)$ and $e_1, e_2\in E_{G_j}^s$,} \\
\sigma(e_1)\sigma(e_1')\sigma(e_2)^2& \mbox{ if $e_1\in E_{G_j}^s$ and  $e_2\in E_{G_j}^r$,}\\
\sigma(e_1)^2\sigma(e_2)^2 & \mbox{ if $e_1,e_2\in E_{G_j}^r$.}\\
\end{array}\right.
\end{equation}
If $E_{G_j}^s=\emptyset$, then $E_{G_j}=E_{G_j}^r$ and $e_1=u_1\varphi(u_1),e_2=u_2\varphi(u_2)\in E_{G_j}^r$. From (\ref{sr-eq-1}) we have
$\sigma(e_1)^2\sigma(e_2)^2=1$ and so $a_{u_i\varphi(u_i)}=\pm1$. Thus, $A_{U_{1j},U_{2 j}}$ is a diagonal matrix with $a_{u_i\varphi(u_i)}=\pm1$ corresponding  edge $e_i\in E_{G_j}^r$. If $E_{G_j}^r=\emptyset$, then $E_{G_j}=E_{G_j}^s$ and $e_1=u_1\varphi(v_1),e_2=u_2\varphi(v_2)\in E_{G_j}^s$. From (\ref{sr-eq-1}) we have $\sigma(e_1)\sigma(e_1')=\sigma(e_2)\sigma(e_2')$. It implies that $A_{U_{1j},U_{2j}}$ is symmetric if $\sigma(e_1)=\sigma(e_1')$,  or antisymmetric if $\sigma(e_1)=-\sigma(e_1')$. If $E_{G_j}^s\not=\emptyset$ and $E_{G_j}^r\not=\emptyset$, by taking $e_1=u_1\varphi(v_1)\in E_{G_j}^s$ and $e_2=u_2\varphi(u_2)\in E_{G_j}^r$,  we have $\sigma(e_1)\sigma(e_1')\sigma(e_2)^2=1$ from (\ref{sr-eq-1}) and so $\sigma(e_1)\sigma(e_1')=1$. It implies that  $A_{U_{1j},U_{2j}}$ is symmetric with diagonal entry $a_{u_2\varphi(u_2)}=\pm1$ corresponding edge $e_2\in E_{G_j}^r$.

 Summarising above analysis, we have
$$\Gamma_j\simeq\left\{\begin{array}{ll}
\Gamma_j^a, & \mbox{if $G_j$ contains  reflexive edges},\\
\Gamma_j^a~~ \mbox{or}~~ \Gamma_j^b,  & \mbox{if $G_j$ only contains   symmetric edges.}\\
\end{array}\right.$$
\end{proof}

\begin{re}
  It is worth mentioning that   Lemmas \ref{pro-4-1} and \ref{lem-4-6} determine the local structure of  signed graphs in $\mathbb{S}_1(G)$.
\end{re}

Let $\mathbb{NFS}_1(G)=\{\Gamma\in \mathbb{S}_1(G)\mid F=\emptyset\}$ and $\mathbb{FS}_1(G)=\{\Gamma\in \mathbb{S}_1(G)\mid F\not=\emptyset\}$.
First,  we  characterize the signed graphs in $\mathbb{NFS}_1(G)$. Obviously, $G[U_1\cup U_2]$ is  a reflexive $\varphi$-factor  in this situation. By Lemmas \ref{lem-4-6} and \ref{pro-4-1}, we immediately get the following result.
\begin{thm}[Construction Theorem-I]\label{thm-main-1}
$\Gamma=(G,\sigma)\in \mathbb{NFS}_1(G)$  if and only if $\Gamma\simeq\Gamma^a$ or $\Gamma^b$.
\end{thm}

 Theorem \ref{thm-main-1} implies Theorem \ref{thm-10}, which was obtained in \cite{Akbaria}, and Theorem \ref{thm-12}, which was obtained in \cite{Ghorbani}. In fact, the sign-symmetric graphs determined in  Theorems \ref{thm-10} and  \ref{thm-12} are identical to $\Gamma^a$.  $\Gamma^b$ is new,  and moreover,  the result of Theorem \ref{thm-main-1} is both sufficient and necessary.

\begin{thm}[$\!$\cite{Akbaria}]\label{thm-10}
Let $n$ be an even positive integer and $V_1$ and $V_2$ be two disjoint sets of size
$\frac{n}{2}$. Let $G$ be an arbitrary graph with the vertex set $V_1$. Construct the complement $\overline{G}$ of $G$ with the vertex set $V_2$. Assume that $\Gamma = (K_n, \sigma)$ is a signed complete graph in which $E(G) \cup E(\overline{G})$ is the set of negative edges. Then  $\Gamma$  is sign-symmetric.
\end{thm}

\begin{thm}[$\!$\cite{Ghorbani}]\label{thm-12}
The signed graph $\Gamma$ with the adjacency matrices
$A(\Gamma)=\left(\begin{array}{cc}
B&C\\
C&-B\\
\end{array}\right)$ is sign-symmetric, where $B, C $ are symmetric matrices with  entries from $\{0, 1, -1\}$ and $B$ has a zero diagonal.
\end{thm}

Next, we determine  the signed graphs in $\mathbb{FS}_1(G)$.    We  present some lemmas for preparation.
\begin{lem}\label{pro-4-0}
Let $e_1=\omega_1u_1\in E_{\hat{G}_j}(F,U_{1j}), e_2=\omega_2v_1\in E_{\hat{G}_j}(F, U_{2j})$ and  $e_3=u_2 v_2\in E_{G_j}(U_{1j},U_{2j})$, where $\omega_1$ may be equal to $\omega_2$. If $G[F]=(\Omega_1,\Omega_2)$ is connected,  then\\
(i) $\prod_{i=1}^{3}\sigma(e_i)\sigma(\varphi(e_i))=1$ if  $\omega_1\in \Omega_1$, $\omega_2\in \Omega_2$, or  $\omega_1\in \Omega_2$, $\omega_2\in \Omega_1$;\\
(ii) $\prod_{i=1}^{3}\sigma(e_i)\sigma(\varphi(e_i))=-1$ if $\omega_1, \omega_2\in \Omega_1$,  or $\omega_1, \omega_2\in \Omega_2$.
\end{lem}
\begin{proof}
By assumption, there exist three paths $P_{\omega_1,\omega_2}\subseteq G[F]$, $P_{u_1, u_2}\subseteq G[U_{1j}]$ and $P_{v_1, v_2}\subseteq G[U_{2j}]$ such that
\begin{equation}\label{CC-eq-1}
C=P_{u_1, u_2}+\omega_1u_1+P_{\omega_1,\omega_2}+\omega_2v_1+P_{v_1,v_2}+u_2 v_2
\end{equation}
 is a cycle of $\hat{\Gamma}_j$. Since $\hat{\Gamma}_j[U_{2j}]=-\hat{\Gamma}_j[U_{1j}]$
from (\ref{CC-eq-15}), we have $\sigma(\varphi(e))=-\sigma(e)$ if  $e$ is an edge of   $P_{u_1, u_2}$ or $P_{v_1,v_2}$. Additionally,  $\varphi$ fixes each edge of $P_{\omega_1,\omega_2}$. Therefore, we have
\begin{equation}\label{qq-0}\begin{array}{ll}
\sigma(\varphi(C))&=\sigma(\varphi(e_1))
\sigma(\varphi(e_2))\sigma(\varphi(e_3))\cdot \prod_{e\in E(C)\setminus\{e_1,e_2,e_3\}}\sigma(\varphi(e))\\[0.2cm]
&=(-1)^{|C|-|P_{\omega_1,\omega_2}|-3}\cdot\sigma(C)\cdot\prod_{i=1}^{3}
\sigma(e_i)\sigma(\varphi(e_i)).
\end{array}
\end{equation}
Note that  $|P_{\omega_1,\omega_2}|$ is even if $\omega_1,\omega_2\in\Omega_1$ or $\omega_1,\omega_2\in \Omega_2$, and  $|P_{\omega_1,\omega_2}|$ is odd otherwise. The lemma follows by (\ref{qq-0}) and Theorem \ref{C-p-4}.
\end{proof}

\begin{lem}\label{lemma-20}
Suppose that $e=\omega_1 u_1\in E_{\hat{G}_j}(F, U_{1j})$.\\
(i) If  $ e_1=\omega_1 u_2\in E_{\hat{G}_j}(F, U_{1j})$, then $\sigma(e)\sigma(\varphi(e))=\sigma(e_1)\sigma(\varphi(e_1))$.\\
(ii) If $G[F]=(\Omega_1,\Omega_2)$ is connected and $e_2=\omega_2u_1\in E_{\hat{G}_j}(F,U_{1j})$, then $\sigma(e)\sigma(\varphi(e))\sigma(e_2)$ $\sigma(\varphi(e_2))=1$ if $\omega_1$ and $\omega_2$ in the same partition, and $\sigma(e)\sigma(\varphi(e))\sigma(e_2)\sigma(\varphi(e_2))=-1$ otherwise.
\end{lem}
\begin{proof}
$G[U_{1j}]$ and $G[U_{2j}]$ are connected by Claim \ref{clm-4-0} (i).
Let  $P_{u_1,u_2}\subseteq G_j[U_{1j}]$ be a path between $u_1$ and $u_2$. Then $C=e+e_1+P_{u_1,u_2}$ and $\varphi(C)=\omega_1 \varphi(u_1)+\omega_2\varphi(u_2)+\varphi(P_{u_1,u_2})$ are two  cycles of $\hat{\Gamma}_j$. Since $\sigma(\varphi(e))=-\sigma(e)$ for each edge $e\in P_{u_1,u_2}$, we have
\begin{equation}\label{qq-01}\begin{array}{ll}
\sigma(\varphi(C))&=\sigma(\varphi(e))
\sigma(\varphi(e_1))\cdot \prod_{e\in E(C)\setminus\{e,e_1\}}\sigma(\varphi(e))\\[0.2cm]
&=(-1)^{|P_{u_1,u_2}|}\cdot\sigma(C)\cdot
\sigma(e)\sigma(\varphi(e))\sigma(e_1)\sigma(\varphi(e_1)).
\end{array}
\end{equation}
It follows (i) from Theorem \ref{C-p-4}.

Since $G[F]$ is connected, there exists a path $P_{\omega_1,\omega_2}\subseteq G[F]$. Clearly, $C=P_{\omega_1,\omega_2}+e+e_2$ is a cycle of $\hat{\Gamma}_j$ and  $\varphi(C)=P_{\omega_1,\omega_2}+\omega_1 \varphi(u_1)+\omega_2 \varphi(u_1)$.  We have $$
\sigma(\varphi(C))=\sigma(P_{\omega_1,\omega_2})
\sigma(\varphi(e))\sigma(\varphi(e_2))\\
=\sigma(C)\sigma(e)\sigma(\varphi(e))\sigma(e_2)\sigma(\varphi(e_2)).
$$
It follows (ii)  from Theorem \ref{C-p-4}.
\end{proof}
Let $\hat{\Gamma}_j^a$ and $\hat{\Gamma}_j^b$ be two signed graphs on $\hat{G}_j$ defined    by
\begin{small}
\begin{equation}\label{FF-eq-1}
A(\hat{\Gamma}_j^a)=\bordermatrix{
   &\Omega_1 &  \Omega_2 &U_{1j} & U_{2j} \cr
\Omega_1 &0 & A & D_1 & D_1 \cr
 \Omega_2& A^\top & 0 & D_2 & -D_2  \cr
      U_{1j}&      D_1^\top & D_2^\top & B & Y \cr
      U_{2j}  &                   D_1^\top & -D_2^\top & -Y & -B \cr}\ \   \mbox{and} \ \   A(\hat{\Gamma}_j^b)=\bordermatrix{
   &\Omega_1 &  \Omega_2 &U_{1j} & U_{2j} \cr
\Omega_1 &0 & A & D_1 & -D_1 \cr
 \Omega_2& A^\top & 0 & D_2 &D_2  \cr
      U_{1j}&      D_1^\top & D_2^\top & B & Y \cr
      U_{2j}  &                   D_1^\top & -D_2^\top & -Y & -B \cr}
\end{equation}
\end{small}
where   $A$, $B$, $Y$, $D_1$ and $D_2$ are $(0, -1, 1)$-matrices,  $B^\top=B$ is of zero diagonal, and  $Y^\top=-Y$. By setting
$$P_1=\left(
         \begin{array}{cc|cc}
           -I & 0 & 0 & 0 \\
           0 & I & 0 & 0 \\ \hline
           0 & 0 & 0 & I \\
           0 & 0 & I & 0 \\
         \end{array}
       \right)\ \ \mbox{ and } \ \ P_2=\left(
         \begin{array}{cc|cc}
           -I & 0 & 0 & 0 \\
           0 & I & 0 & 0 \\
           \hline
           0 & 0 & 0 & -I \\
           0 & 0 & I & 0 \\
         \end{array}
       \right),$$
one can verify that $P_1A(\hat{\Gamma}_j^a)P_1^\top=-A(\hat{\Gamma}_j^a)$ and $P_2A(\hat{\Gamma}_j^b)P_2^\top=-A(\hat{\Gamma}_j^b)$. Thus,  the following result holds.
\begin{lem}\label{lem-4-61}
$\hat{\Gamma}_j^a$ and $\hat{\Gamma}^b_j$ are sign-symmetric.
\end{lem}
\begin{lem}\label{FF-lem-1}
 If $G[F]=(\Omega_1,\Omega_2)$ is connected, then $\hat{\Gamma}_j\simeq\hat{\Gamma}_j^a$ or $\hat{\Gamma}_j^b$.
\end{lem}
\begin{proof}
First of all,  assume that $A(\hat{\Gamma}_j)$ is expressed as (\ref{CC-eq-15}). Take $A(\hat{\Gamma}_j[U_{1j}])=-A(\hat{\Gamma}_j[U_{2j}])=B$. Since $G[F]=(\Omega_1,\Omega_2)$ is bipartite, we may take $A_{\Omega_1, \Omega_2}=A$ and thus $A_{\Omega_2, \Omega_1}=A^\top$.
If $G_j=G[U_{1j}, U_{2j}]$ is a symmetric $\varphi$-factor, then $A_{U_{1j},U_{2j}}=Y=0$ because  $G_j$ has two components $G_j[U_{1j}]$ and $G_j[U_{2j}]$. If $G_j=G[U_{1j}, U_{2j}]$ is a reflexive $\varphi$-factor, by taking the edges $e_1=\omega u_1\in E_{\hat{G}_j}(F, U_{1j})$, $e_2=\varphi(e_1)=\omega\varphi(u_1)$, and  $e_3=u_2 v_2\in E_{G_j}(U_{1j},U_{2j})$,
we have   $-1=\prod_{i=1}^{3}\sigma(e_i)\sigma(\varphi(e_i))$ by Lemma \ref{pro-4-0} (ii).
It implies that $\sigma(e_3)\sigma(\varphi(e_3))=-1$.  Hence, $e_3$ and $\varphi(e_3)$ must be a pair of symmetric edges with opposite signs and so $A_{U_{1j},U_{2j}}=Y$ is an antisymmetric matrix.

It remains to verify that \begin{equation}\label{U-eq-1}\left(\begin{array}{cc}A_{\Omega_1,U_{1j}}&A_{\Omega_1,U_{2j}}\\
A_{\Omega_2,U_{1j}}&A_{\Omega_2,U_{2j}}\end{array}\right)=\left(\begin{array}{cc}D_1&D_1\\
D_2&-D_2\end{array}\right)\ \ \mbox{ or  } \ \ \left(\begin{array}{cc}D_1&-D_1\\
D_2&D_2\end{array}\right).\end{equation}
First, we can let $A_{\Omega_1,U_{1j}}=(a_{\omega_1u})=D_1$ and  $A_{\Omega_2,U_{1j}}=(a_{\omega_2u})=D_2$. Next, we  verify that $A_{\Omega_1,U_{2j}} =(a_{\omega_1 \varphi(u)})=D_1$ and  $A_{\Omega_2,U_{2j}}=(a_{\omega_2 \varphi(u)})=-D_2$, or $A_{\Omega_1,U_{2j}} =-D_1$ and  $A_{\Omega_2,U_{2j}}=D_2$.

Note that $\varphi(E_{\hat{G}_j}(F, U_{1j}))=E_{\hat{G}_j}(F, U_{2j})$ from Claim \ref{clm-4-1} (ii),
for any edge $e=\omega_1 u\in E_{\hat{\Gamma}_j}(\Omega_1,U_{1j})$, we have $\sigma(e)=a_{\omega_1u}\in \{\pm a_{\omega_1\varphi(u)}\}$. If $\sigma(\omega_1u)=\sigma(\omega_1\varphi(u))$, i.e. $a_{\omega_1u}=a_{\omega_1\varphi(u)}$,  by taking the edge $e'=\omega_1 u'\in  E_{\hat{\Gamma}_j}(\Omega_1,U_{1j})$, from Lemma \ref{lemma-20} (ii) we get $1=\sigma(e)\sigma(\varphi(e))=\sigma(e')\sigma(\varphi(e'))$
and so $a_{\omega_1u'}=a_{\omega_1\varphi(u')}$. By taking the edge $e''=\omega_1'u\in  E_{\hat{\Gamma}_j}(\Omega_1,U_{1j})$, from Lemma \ref{lemma-20} (i) we have $1=\sigma(e)\sigma(\varphi(e))=\sigma(e'')\sigma(\varphi(e''))$
and so $a_{\omega_1'u}=a_{\omega_1'\varphi(u)}$. It implies that $A_{\Omega_1,U_{2j}}=D_1$. Similar to the analysis described above, we get $A_{\Omega_1,U_{2j}}=-D_1$ if $\sigma(\omega_1u)=-\sigma(\omega_1\varphi(u))$, i.e. $a_{\omega_1u}=-a_{\omega_1\varphi(u)}$.
Furthermore, similar as the above arguments,   for any edge $\omega_2u\in E_{\hat{\Gamma}_j}(\Omega_2, U_{1j})$, we deduce that
$$\left\{\begin{array}{ll}
A_{\Omega_2,U_{2j}}= D_2,& \mbox{if $a_{\omega_2u}=a_{\omega_2\varphi(u)}$,}\\
A_{\Omega_2,U_{2j}}= -D_2,& \mbox{if  $a_{\omega_2u}=-a_{\omega_2\varphi(u)}$. }\end{array}\right.$$
At last,  for any two rotated edges $e=\omega_1 u\in E_{\hat{\Gamma}_j}(\Omega_1,U_{1j})$ and $\tilde{e}=\omega_2\varphi(u)\in E_{\hat{\Gamma}_j}(\Omega_2,U_{2j})$,
we obtain
$\sigma(e)\sigma(\varphi(e))=-\sigma(\tilde{e})\sigma(\varphi(\tilde{e}))$ by Lemma \ref{lemma-20} (ii), which implies that $\sigma(e)=a_{\omega_1u}=a_{\omega_1\varphi(u)}=\sigma(\varphi(e))$ if and only if   $\sigma(\tilde{e})=a_{\omega_2\varphi(u)}=-a_{\omega_2u}
=-\sigma(\varphi(\tilde{e}))$.
 Summarizing above discussions, we claim that (\ref{U-eq-1}) holds.

Therefore,  $\hat{\Gamma}_j\simeq\hat{\Gamma}_j^a$ or $\hat{\Gamma}_j^b$.
We complete this proof.
\end{proof}
\begin{re}
Suppose that $G[F]$ is not connected and  consists of  some connected components: $G[F]=\cup_{i=1}^lG[F_i]$.  Let $G[F_i]=(\Omega_{i1}, \Omega_{i2})$ for $i=1, \ldots, l$. If $G[F_i\cup U_{1j}\cup U_{2j}]$ is disconnected, then there is no any edge between $F_i$ and $U_{1j}\cup U_{2j}$.  If $G[F_i\cup U_{1j}\cup U_{2j}]$ is connected, then, by regarding $G[F_i\cup U_{1j}\cup U_{2j}]$ as $G[F\cup U_{1j}\cup U_{2j}]$ in the proof of Lammas \ref{pro-4-0}, \ref{lemma-20} and \ref{FF-lem-1}, the corresponding results still hold. 
\end{re}

From the proof of Lemma \ref{FF-lem-1}, we see that there is no  reflexive edge in $E_{G_j}(U_{1j}, U_{2j})$ for the reflexive factor $G_j$. It means that the block matrix $A(\hat{\Gamma}_j^a)_{U_{1j}, U_{2j}}$ in \eqref{FF-eq-1} differs from $A(\Gamma_j^a)$ in \eqref{Gamma-j-eq-1}. It implies the following result.

\begin{prop}
Let $G$ be a connected non-bipartite graph with an automorphism $\varphi$ of order two. If there exists a reflexive factor $G_j$ containing a reflexive edge, then $\varphi$ can not be a weak automorphism of any signed graph  on $G$.
\end{prop}

If $\Gamma=\hat{\Gamma}_1=(\hat{G}_1, \sigma)$ (i.e., $\Gamma$ has only one $\varphi$-factor), then Lemma \ref{FF-lem-1} directly gives the following result.
\begin{thm}[Construction Theorem-II]\label{ff-thm-2}
Let  $\Gamma=(G,\sigma)\in\mathbb{FS}_1(G)$. If   $\Gamma=\hat{\Gamma}_1$, then $\Gamma\simeq\hat{\Gamma}^a$ or $\hat{\Gamma}^b$, where $\hat{\Gamma}^a$ and $\hat{\Gamma}^b$ are presented as the form  (\ref{FF-eq-1}).
\end{thm}

Based on  Lemma \ref{FF-lem-1}, the  signed graphs in $ \mathbb{FS}_1(G)$   can be characterized in general form.

\begin{thm}[Construction Theorem-III]\label{ff-thm-3}
$\Gamma=(G,\sigma)\in \mathbb{FS}_1(G)$ if and only if $\Gamma\simeq \tilde{\Gamma}$, where $\tilde{\Gamma}$ is defined by
\begin{equation*}
\begin{scriptsize}
A(\tilde{\Gamma})=\left(\begin{array}{cc|ccccc|ccccc}
    0 & A & D_{11} & D_{11} & \cdots & D_{1m} & D_{1m} & D_{1,m+1} & -D_{1,m+1} & \ldots & D_{1s} & -D_{1s} \\
    A^\top & 0 & D_{21} & -D_{21} & \cdots & D_{2m} & -D_{2m} & D_{2,m+1} & D_{2,m+1} & \cdots & D_{2s} & D_{2s} \\
    \hline
    D_{11}^\top & D_{21}^\top & B_1 & Y_1 &  &  &  &  &  &  &  &  \\
     D_{11}^\top &-D_{21}^\top  & -Y_1 & -B_1 &  &  &  &  &  &  &  &  \\
    \vdots &\vdots  &  &  & \ddots &  &  &  &  &  &  &  \\
     D_{1m}^\top& D_{2m}^\top &  &  &  & B_m & Y_m &  &  &  &  &  \\
     D_{1m}^\top& -D_{2m}^\top &  &  &  & -Y_m & -B_m &  &  &  &  &  \\
    \hline
     D_{1,m+1}^\top& D_{2,m+1}^\top &  &  &  &  &  & B_{m+1} & Y_{m+1} &  &  &  \\
    -D_{1,ms+1}^\top & D_{2,m+1}^\top &  &  &  &  &  & -Y_{m+1} & -B_{m+1} &  &  &  \\
     \vdots& \vdots &  &  &  &  &  &  &  & \ddots &  &  \\
    D_{1s}^\top & D_{2s}^\top &  &  &  &  &  &  &  &  & B_{s} & Y_{s} \\
    -D_{1s}^\top & D_{2s}^\top &  &  &  &  &  &  &  &  & -Y_{s} & -B_{s} \\
  \end{array}\right),
\end{scriptsize}
\end{equation*}
where $A$, $D_{1i}$, $D_{2i}$, $B_i$,  and $Y_i$ are $(0, 1, -1)$-matrices,  $B_i$ is a symmetric matrix with a zero diagonal, and  $Y_i^\top=-Y_i$ for $i=1, \ldots, s$.
\end{thm}
\begin{proof}
By Claim \ref{clm-4-0} (ii), $\Gamma$ has $s$ $\varphi$-factors. By Lemma \ref{FF-lem-1}, we may assume that  $\hat{\Gamma}_j\simeq \hat{\Gamma}_j^a$ for $j=1, \ldots, m$ and $\hat{\Gamma}_j\simeq \hat{\Gamma}_j^b$ for $j=m+1, \ldots, s$. We label the vertices of $\Gamma$  by blocks in the following order: $\Omega_1\cup\Omega_2, U_{11}\cup U_{21},\ldots,U_{1s}\cup U_{2s}$ where $U_{2j}=\varphi(U_{ij})$ for $j=1, \ldots, s$. By  Lemma \ref{FF-lem-1}, we have $\Gamma\simeq \tilde{\Gamma}$.

Conversely, suppose that $\Gamma\simeq \tilde{\Gamma}$. Let
$$P=\left(
      \begin{array}{cc}
        -I &  \\
         & I \\
      \end{array}
    \right)\oplus \underbrace{\left(
      \begin{array}{cc}
        0 & I  \\
         I&  0 \\
      \end{array}
    \right)\oplus \cdots \oplus \left(
      \begin{array}{cc}
        0 & I  \\
         I&  0 \\
      \end{array}
    \right)}_m \oplus \underbrace{\left(
      \begin{array}{cc}
        0 & -I  \\
         -I& 0 \\
      \end{array}
    \right)\oplus\cdots \oplus\left(
      \begin{array}{cc}
        0 & -I  \\
         -I& 0 \\
      \end{array}
    \right)}_{s-m}.$$
It is routine  to verify that $PA(\tilde{\Gamma})P^\top=-A(\tilde{\Gamma})$, that is $\tilde{\Gamma}$ is sign-symmetric. Obviously, $P$ is just a weak automorphism of order 2 of $\tilde{\Gamma}$.
Therefore, $\tilde{\Gamma}\in \mathbb{FS}_1(G)$ and so $\Gamma\in \mathbb{FS}_1(G)$.
\end{proof}

\begin{figure}[h]
  \centering
  \includegraphics[width=6cm]{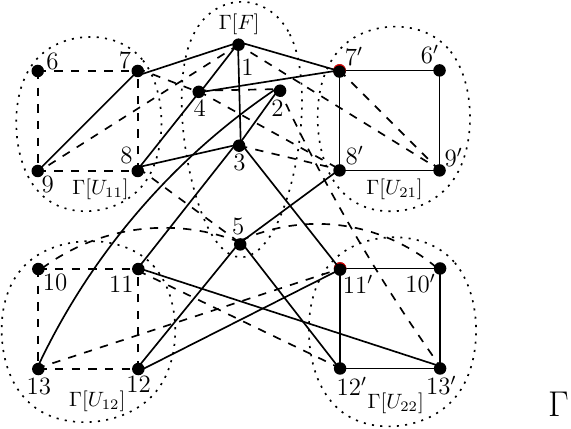}\\
  \caption{\small $\Gamma=(G, \sigma)\in \mathbb{FS}_1(G)$}\label{figure46}
\end{figure}

Now we give an example as the application of Theorem \ref{ff-thm-3}.
\begin{example}
In Fig.$\!$ \ref{figure46},   $\varphi$ is an automorphism of order 2 on $G$ such that $\varphi(i)=i$ for $i=1, \ldots, 5$ and $\varphi(i)=i'$ for $i=6, \ldots, 13$.  $\Gamma[F]=(\Omega_1, \Omega_2)$ is a bipartite signed graph,  where $\Omega_1=\{1,  2\}$ and $\Omega_2=\{3, 4, 5\}$. Let $U_{11}=\{6, 7, 8, 9\}$, $U_{21}=\{6', 7', 8', 9'\}$, $U_{12}=\{10, 11, 12, 13\}$, $U_{22}=\{10', 11', 12', 13'\}$.
We get
$$A(\Gamma)
=\left(
   \begin{array}{cccccc}
0&A_{\Omega_1, \Omega_2} & A_{\Omega_1, U_{11}}& A_{\Omega_1, U_{11}}& A_{\Omega_1, U_{12}} & -A_{\Omega_1, U_{12}} \\ [0.1cm]
A_{\Omega_1, \Omega_2}^\top&0 & A_{\Omega_2,U_{11}}& -A_{\Omega_2, U_{11}}&A_{\Omega_2, U_{12}} &A_{\Omega_2, U_{12}}\\ [0.1cm]
A_{\Omega_1, U_{11}}^\top&A_{\Omega_2, U_{11}}^\top & A(\Gamma[U_{11}])& 0&0&0 \\[0.1cm]
A_{\Omega_1, U_{11}}^\top &-A_{\Omega_2, U_{11}}^\top & 0 & -A(\Gamma[U_{11}])&0&0\\[0.1cm]
A_{\Omega_1, U_{12}}^\top&A_{\Omega_2, U_{12}}^\top &0 & 0&A(\Gamma[U_{12}])&A_{U_{12},U_{22}} \\ [0.1cm]
-A_{\Omega_1, U_{12}}^\top &A_{\Omega_2, U_{12}}^\top & 0 & 0&-A_{U_{12},U_{22}}&-A(\Gamma[U_{12}])
   \end{array}
 \right).$$
 It can be checked that $\varphi$ is a principal weak automorphism of $\Gamma$ of order 2, and so $\Gamma\in \mathbb{FS}_1(G)$. Moreover, we  calculate that
$\mathrm{Spec}(\Gamma)=\{\pm3.705,\pm3.017, \pm2.491, \pm2.373, \pm2.019,$  $\pm1.987, \pm1.349, \pm 0.952, \pm0.744, \pm0.189, 0\}$.
\end{example}

Summarizing the results of Theorems \ref{thm-main-1}, \ref{ff-thm-2} and \ref{ff-thm-3},  we obtain a simple and complete  characterization of all  sign-symmetric signed graphs in  $\mathbb{S}_1(G)$. We have known some  examples of sign-symmetric signed graphs in  $\mathbb{S}_r(G)$ for $r\ge 2$ (for instance, $\Gamma_2\in \mathbb{S}_2(G)$ in Example \ref{ex3}). However, the complete characterization of  sign-symmetric signed graphs in $\mathbb{S}_2(G)$ (and further in $\mathbb{S}_3(G)$ and so on) remains open. We would like to  propose the following problem:
\begin{prob}
 Characterize the sign-symmetric signed graphs in $ \mathbb{S}_r(G)$ for $r\geq2$.
\end{prob}

\vskip 0.1 true cm
\begin{center}{\textbf{Acknowledgments}}
\end{center}

 This project  is supported by the National Natural Science Foundation of China (Nos.
 12001185, 12371353) and the Scientific Research Fund of Hunan Province Education Department (No. 22A0433).

\begin{center}{\textbf{Disclosure statement}}\end{center}
No potential conflict of interest was reported by the authors.

\baselineskip=0.15in

\end{document}